\theoremstyle{plain} 
\newtheorem{theorem}{Theorem}[section]
\newtheorem{proposition}[theorem]{Proposition}
\newtheorem{lemma}[theorem]{Lemma}
\newtheorem{corollary}[theorem]{Corollary}
\theoremstyle{remark}
\theoremstyle{definition}
\newtheorem{definition}[theorem]{Definition}
\numberwithin{equation}{section}
\DeclareMathOperator{\Stab}{Stab}
\DeclareMathOperator{\Spec}{Spec}
\DeclareMathOperator{\Orb}{Orb}
\DeclareMathOperator{\Gal}{Gal}
\DeclareMathOperator{\tr}{tr}
\DeclareMathOperator{\Prob}{Prob}
\DeclareMathOperator{\Per}{Per}
\DeclareMathOperator{\Frob}{Frob}
\DeclareMathOperator{\FPP}{FPP}
\DeclareMathOperator{\Fix}{Fix}
\DeclareMathOperator{\Norm}{N}
\newcommand{\fp}{ {\mathfrak p} }
\newcommand{\fm}{ {\mathfrak m} }
\newcommand{\fq}{ {\mathfrak q} }
\newcommand{\fc}{ {\mathfrak c} }
\newcommand{\fo}{ {\mathfrak o} }
\newcommand{\cS}{ {\mathcal S} }
\newcommand{\cU}{ {\mathcal U} }
\newcommand{\F} { {\mathbb F}} 
\newcommand{\bP} { {\mathbb P}}
\newcommand{\tp}{ {\tilde \varphi}}
\begin{document}

%%%%% To ease editing, for IMPAN journals add:

\baselineskip=17pt

%%%%%%%%%%%

%% In the running head, replace first names by initials 
%% and give an abbreviation of the title.

\title[Fixed point proportions for non-geometric iterated extensions]{Fixed point proportions for Galois groups of non-geometric iterated extensions}

\author[J. Juul]{Jamie Juul}
\address{
	Department of Mathematics and Statistics\\
	Amherst College\\
	Amherst, MA 01002\\
	USA
}
\email{jamie.l.rahr@gmail.com}

\date{}

\begin{abstract}
Given a map $\varphi:\mathbb{P}^1\rightarrow \mathbb{P}^1$ of degree greater than 1 defined over a number field $k$, one can define a map $\varphi_\mathfrak{p}:\mathbb{P}^1(\mathfrak{o}_k/\mathfrak{p})\rightarrow \mathbb{P}^1(\mathfrak{o}_k/\mathfrak{p})$ for each prime $\mathfrak{p}$ of good reduction, induced by reduction modulo $\mathfrak{p}$. It has been shown that for a typical $\varphi$ the proportion of periodic points of $\varphi_\mathfrak{p}$ should tend to $0$ as $|\mathbb{P}^1(\mathfrak{o}_k/\mathfrak{p})|$ grows. In this paper, we extend previous results to include a weaker set of sufficient conditions under which this property holds. We are also able to show that these conditions are necessary for certain families of functions, for example, functions of the form $\varphi(x)=x^d+c$, where $0$ is not a preperiodic point of this map. We study the proportion of periodic points by looking at the fixed point proportion of the Galois groups of certain extensions associated to iterates of the map.

\end{abstract}

\subjclass[2010]{37P05, 11R32, 14G25}

\keywords{arithmetic dynamics, periodic points}

\maketitle

\section{Introduction}

Let $k$ be a field and $\varphi(x)$ a rational map of degree $d$ greater than 1 with coefficients in $k$. Consider $\varphi^n(x)-t$, where $\varphi^n(x)$ is the $n$-th iterate of $\varphi(x)$ and $t$ is transcendental over $k$. If $\varphi^n(x)-t$ is separable for each $n$, adjoining the roots of $\varphi^n(x)-t$ to $k(t)$ gives us a Galois extension of $k(t)$. 

In order to understand certain dynamic and number theoretic properties of $\varphi$ it is useful to understand the structure of the Galois groups of these extensions (see for example, \cite{odoni}\cite{Jones2}\cite{HJM}\cite{Jones}\cite{JKMT}\cite{JO}). The proportion of elements of the Galois group fixing some root of $\varphi^n(x)-t$, called the \textit{fixed point proportion},  is particularly useful in applications. This paper generalizes some of the results from \cite{JKMT}, which examines proportions of periodic points by studying the fixed point proportion of these Galois groups.

If $\varphi(x)$ has coefficients in a number field $k$, then for any prime $\fp$ of $\fo_k$ (of good reduction) we can consider the map $\varphi_\fp:\bP^1(\fo_k/\fp)\rightarrow \bP^1(\fo_k/\fp)$ induced by reduction modulo $\fp$. Since $\bP^1(\fo_k/\fp)$ is finite, the orbit of any point in $\bP^1(\fo_k/\fp)$ will be finite and hence must contain a cycle by the pigeonhole principle. Thus, any point in $\bP^1(\fo_k/\fp)$ is \textit{preperiodic} under this map. It is natural to ask: what proportion of elements of $\bP^1(\fo_k/\fp)$ are \textit{periodic} under this map, that is, for what proportion of $\alpha \in \bP^1(\fo_k/\fp)$ does there exist $n>1$ such that $\varphi^n(\alpha)=\alpha$?

%If we assume that as we range over various choices of $\fp$, $\varphi_\fp$ behaves randomly, then we expect that for $\alpha\in\bP^1(\fo_k/\fp)$, the
%size of the orbit $\cO_\varphi(\alpha) = \{ \alpha, \varphi_\fp(\alpha), \dots,
%\varphi^n_\fp(\alpha), \dots \}$ will be about $\sqrt{|\bP^1(\fo_k/\fp)|}$.  This heuristic is based on the birthday problem; if we choose items from a set of size $|\bP^1(\fo_k/\fp)|$ at random we expect to get a repetition after about $\sqrt{|\bP^1(\fo_k/\fp)|}$ choices (see \cite{FO}\cite{Bach}\cite{Silver}\cite{BGH}).  Hence, one might guess that there is about a
%$1/\sqrt{|\bP^1(\fo_k/\fp)|}$ chance that a given $\alpha$ is $\varphi_\fp$-periodic, and that
%the proportion of $\varphi_\fp$-periodic points in $\bP^1(\fo_k/\fp)$ is about $1/\sqrt{|\bP^1(\fo_k/\fp)|}$.

Heuristics suggest that for a typical $\varphi$, we should expect the proportion of $\varphi_\fp$-periodic points in $\bP^1(\fo_k/\fp)$ to approach zero as $|\bP^1(\fo_k/\fp)|$  tends to infinity (see \cite{FO}\cite{Bach}\cite{Silver}\cite{BGH}). In fact, in \cite{JKMT} it is shown that for any degree $d$ and $\epsilon>0$,there is a Zariski-open subset of the set of rational functions of degree $d$ such that for all $\varphi$ in this set, the proportion of periodic points is less than $\epsilon$. Moreover, in that paper specific conditions are given which ensure that a rational function belongs to this set. The conditions guarantee that the iterated extensions mentioned above are both geometric and, in some sense, as large as possible. We call the extensions \textit{geometric} when the splitting fields of $\varphi^n(x)-t$ over $k(t)$ do not contain any elements of $\bar{k}\setminus k$. However, the conditions given there are not necessary; here we extend the results by showing that we can often weaken the conditions to allow extensions that are non-geometric, but we will still require the geometric part of the extensions to be as large as possible. %This paper deals with cases like this, where the extensions are non-geometric, but we still require the geometric part of the extensions to be as large as possible.

Further, it is easy to find examples of rational functions for which the proportion of periodic points does not tend to zero. For example, the map  $\varphi_p:\bP^1(\F_p) \rightarrow \bP^1(\F_p)$ induced by $\varphi(x)=x^3+1$ is bijective whenever $\F_p$ does not contain a cube root of unity, that is, whenever $p \not\equiv 1 \mod 3$. Thus, for these primes every point is periodic under this map, which implies that the limit supremum of the proportion of periodic points is one. We will see that, with our assumptions on the geometric part of the iterated extensions, the only case when the proportion of periodic points can fail to tend to zero is when the map induces a bijection on the residue field for infinitely many primes.

Note, the rational maps that induce a bijection on $\bP^1(\fo_k/\fp)$ for infinitely many primes $\fp$ are rare in the sense that they belong to a special class of maps called \textit{exceptional maps}, which has been completely classified by Guralnick, Muller,
and Saxl \cite{GMS}. Their results generalize a conjecture of Schur from 1923 \cite{Schur}, proven by Fried in 1970 \cite{Fried}, which states that polynomials with this property are compositions
of linear polynomials and Dickson polynomials.

%A brief summary of useful results regarding the structure of the Galois groups of iterated extensions is given in Section \ref{galoisgroups}.

%\section{Galois Groups of Iterated Extensions}\label{galoisgroups}

In order to describe these results precisely we fix some notation. Let $k$ be any field and let $\varphi(x) \in k(x)$ be a rational function with degree $d>1$. Suppose also that $\varphi'(x)\neq
0$, then $\varphi^n(x)-t$ is separable for all $n$. To see this first note, it is easy to see by an induction argument that ${(\varphi^n)}'(x)\neq 0$ for all $n$. Now, let $p_n, q_n \in k[x]$ be the numerator and denominator of $\varphi^n(x)$ respectively. Note, the roots of $\varphi^n(x)-t$ are the roots of $p_n(x)-tq_n(x)$, so we want to show that $p_n(x)-tq_n(x)$ is separable. Since $p_n(x)-tq_n(x)$ is irreducible over $k(t)$, if it has
a double root then we must have $p_n'(x)-tq_n'(x)=0$ for all $x$ and hence $p_n'(x)=q_n'(x)=0$ for all $x$. This implies
${(\varphi^n)}'=\frac{q_np_n'-p_nq_n'}{(q_n)^2}=0$ for all $x$, which gives a contradiction.

Let $K_n$ be the splitting field of $\varphi^n(x)-t$ over $k(t)$ in some integral closure. Let \begin{align*}
E&=K_1\cap \overline{k}\\
A &= \Gal(E/k)\\
G&=\Gal(K_1/E(t))\\
G_n&=\Gal(K_n/E(t))\\
B_n&=\Gal(K_n/k(t)).\\
\end{align*}

We want to study the fixed point proportion of $B_n$, we can do so by studying the fixed point proportion in each coset of $B_n/G_n$. 

\begin{definition} Let $H$ be any set acting on another set $S$, we define the \textit{fixed point proportion} of $H$, denoted $\FPP(H)$, to be the proportion of elements of $H$ fixing at least one element of $S$.
\end{definition}

Note, $B_n/G_n \cong A$ for all $n$. For each $a\in A$ we let $(B_n)_a$ denote the elements of $B_n$ whose restriction to $E$ is $a$. Then if $\sigma$ is any element of $(B_n)_a$ we have $(B_n)_a=G_n\sigma$. With $\varphi$ as above,  we can consider the action of any subset of $B_n$ on the roots of $\varphi^n(x)-t$.

\begin{theorem}\label{FPP theorem}
	Let $\varphi(x)\in k(x)$ be a rational function of degree $d$. With notation as above, the following hold.
	\begin{enumerate}
		\item For any $\epsilon$ there exists $N_\epsilon$ such that if $G_{N_\epsilon}\cong[G]^{N_\epsilon}$ and each coset of $B_1/G$ has some fixed point free element, then $\FPP((B_{N_\epsilon})_a) <\epsilon$ for each $a\in A$, and hence  \[\FPP(B_{N_\epsilon})<\epsilon.\] 
		\item If $G_N\cong[G]^N$ and there is some coset $G\sigma \in B_1/G$ such that each element of $G\sigma$ has a fixed point, $\FPP((B_N)_a)=1$ where $a=\sigma|_E$, and hence \[\FPP(B_N)>\frac1{|A|}.\]
	\end{enumerate}
\end{theorem}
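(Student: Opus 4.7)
The plan is to translate the problem into a statement about the action of $B_n$ on a $d$-ary preimage tree, then exploit the assumption $G_n\cong[G]^n$ to extract and analyze a one-variable recursion for the fixed-point proportion. Identify the roots of $\varphi^n(x)-t$ with the leaves of a rooted $d$-ary tree $T_n$ of depth $n$, where the children of an internal node $\gamma$ at level $i$ are the roots of $\varphi(x)-\gamma$. The group $B_n$ acts on $T_n$ preserving parent-child adjacency, and $\sigma\in B_n$ fixes a leaf iff it fixes a root-to-leaf path. The key structural consequence of the wreath hypothesis is the following: for each internal node $\gamma$ at level $i<n$ and each $\sigma\in(B_n)_a$ with $\sigma(\gamma)=\gamma$, the restriction of $\sigma$ to the subtree below $\gamma$ identifies canonically with an element of $(B_{n-i})_a$ --- the coset label $a$ is inherited because $E\subseteq K_1$ lies inside every such subtree field. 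Moreover, with $\rho=\sigma|_{K_1}$ held fixed, as $\sigma$ varies uniformly in its fiber, the restrictions to the subtrees below the various $\alpha\in F(\rho)$ are mutually independent and each uniformly distributed in $(B_{n-1})_a$.

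This structural input yields the one-variable recursion
\[
f_n(a):=\FPP\bigl((B_n)_a\bigr)=\frac{1}{|(B_1)_a|}\sum_{\rho\in(B_1)_a}\Bigl[1-\bigl(1-f_{n-1}(a)\bigr)^{|F(\rho)|}\Bigr]=:g_a\bigl(f_{n-1}(a)\bigr),
\]
where $F(\rho)$ is the set of roots of $\varphi(x)-t$ fixed by $\rho$. For Part (2), the hypothesis that every $\rho\in(B_1)_a$ fixes some root gives both $f_1(a)=1$ and $|F(\rho)|\geq 1$ on the coset, so $g_a(1)=1$ and $f_n(a)=1$ for all $n$ by induction; the bound $\FPP(B_N)>\tfrac{1}{|A|}$ then follows from $f_N(a)=1$ together with the strictly positive contribution of the identity element of $G_N$, which fixes every leaf.

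For Part (1), $g_a$ is a convex combination of the concave functions $1-(1-x)^k$, hence concave on $[0,1]$, with $g_a(0)=0$ and $g_a(1)=1-\Prob_{\rho}(|F(\rho)|=0)<1$ by the fixed-point-free hypothesis. A Burnside-style calculation --- using that $G\trianglelefteq B_1$ acts transitively on the $d$ roots of $\varphi(x)-t$, as is standard for these iterated extensions --- shows that the average of $|F(\rho)|$ over each coset of $G$ equals $1$, so $g_a'(0)=1$. This forces strict concavity of $g_a$ (mean $1$ with a fixed-point-free element forces some $|F(\rho)|\geq 2$), and combined with $g_a'(0)=1$ gives $g_a(x)<x$ for every $x\in(0,1]$, so the iteration $f_n(a)=g_a^{n-1}(f_1(a))$ is strictly decreasing to $0$. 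Finiteness of $A$ lets us pick $N_\epsilon$ uniformly in $a$ with $f_{N_\epsilon}(a)<\epsilon$, giving $\FPP(B_{N_\epsilon})<\epsilon$. The main obstacle is the structural reduction in the first paragraph: verifying that the subtree restriction of a uniform $\sigma\in(B_n)_a$ lands in, and is uniform on, the same coset $(B_{n-1})_a$ --- not on the normal subgroup $G_{n-1}$ or some other coset --- requires careful bookkeeping of how the non-geometric group $A$ threads through the iterated wreath decomposition $G_n\cong[G]^n$.
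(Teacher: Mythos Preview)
Your approach is essentially the same as the paper's, differing only in presentation. The paper works with the indicatrix $\Phi_{(B_1)_a}(x)=\frac{1}{|(B_1)_a|}\sum_{\rho}x^{|F(\rho)|}$ and proves the functional equation $\Phi_{(B_n)_a}=\Phi_{(B_1)_a}^{\,n}$ via the \emph{bottom-up} wreath decomposition $(\pi;\tau_1,\ldots,\tau_{d^{n-1}})$ with $\pi\in(B_{n-1})_a$, $\tau_i\in(B_1)_a$; your recursion $f_n=g_a(f_{n-1})$ is the identical statement after the substitution $g_a(x)=1-\Phi_{(B_1)_a}(1-x)$, derived instead from the \emph{top-down} decomposition $(\rho;\sigma_1,\ldots,\sigma_d)$ with $\rho\in(B_1)_a$, $\sigma_j\in(B_{n-1})_a$. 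Under this change of variable your concavity of $g_a$, $g_a(0)=0$, $g_a'(0)=1$, and $g_a(1)<1$ become exactly the paper's convexity of $\Phi$, $\Phi(1)=1$, $\Phi'(1)=1$, and $\Phi(0)>0$, and both conclude by the same monotone-iteration argument and the same Burnside computation on a coset of a transitive normal subgroup. One incidental difference: the paper chooses $N_\epsilon$ by maximizing over \emph{all} transitive $H\leq S_d$ and $\sigma\in S_d$, so its $N_\epsilon$ depends only on $d$ and $\epsilon$, whereas your uniformity over the finite set $A$ gives an $N_\epsilon$ that a priori depends on $\varphi$; both suffice for the theorem as stated.
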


The hypothesis that $G_n$ is a wreath product is not too restrictive as it can be shown to hold under fairly simple conditions \cite[Lemma 3.3]{JKMT}.  In fact, if $k$ is a number field or function field, for any fixed $n$ and for ``most'' rational functions defined over $k$, $B_n=G_n=[S_d]^n$ the \textit{$n$-th iterated wreath product} of $S_d$ with itself \cite[Theorem III]{odoni}\cite[Corollary 4.3]{JO}. 

We get the following as an immediate corollary of Theorem \ref{FPP theorem}.

\begin{corollary}\label{generalFPP}
	Suppose that $G_n\cong[G]^n$ for every $n\geq 1$ and every coset of $B_1/G$ has an element acting without fixed points on the roots of $\varphi^n(x)-t$. Then \[\lim_{n \to \infty} \FPP(B_n) = 0.\] 
\end{corollary}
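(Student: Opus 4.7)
The plan is to deduce this as a direct consequence of Theorem \ref{FPP theorem}(1), after establishing a simple monotonicity fact: $\FPP(B_{n+1}) \le \FPP(B_n)$ for every $n\ge 1$. Once monotonicity is in hand, the corollary is automatic. For any $\epsilon > 0$, the hypotheses of Theorem \ref{FPP theorem}(1) are satisfied for the value $N_\epsilon$ produced there (they hold for every $n$ by assumption), so $\FPP(B_{N_\epsilon}) < \epsilon$, and monotonicity propagates this bound to all $n \ge N_\epsilon$. This gives $\lim_{n\to\infty}\FPP(B_n) = 0$.

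To prove the monotonicity, I would work with the natural restriction homomorphism $\pi\colon B_{n+1} \to B_n$. It is well-defined and surjective because $K_n \subseteq K_{n+1}$: every root of $\varphi^n(x)-t$ is of the form $\varphi(\beta)$ for some root $\beta$ of $\varphi^{n+1}(x)-t$, and surjectivity of restriction between Galois extensions is standard. The key observation is that elements of $B_{n+1}$ commute with $\varphi$ because $\varphi\in k(x)$: if $\sigma \in B_{n+1}$ fixes a root $\beta$ of $\varphi^{n+1}(x)-t$, then
\[
\pi(\sigma)(\varphi(\beta)) \;=\; \sigma(\varphi(\beta)) \;=\; \varphi(\sigma(\beta)) \;=\; \varphi(\beta),
\]
and $\varphi(\beta)$ is a root of $\varphi^n(x)-t$. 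Hence $\pi$ sends fixed-point-having elements of $B_{n+1}$ into fixed-point-having elements of $B_n$. Since $\pi$ is a surjective group homomorphism, every fiber has the same cardinality $|B_{n+1}|/|B_n|$, and therefore
\[
\FPP(B_{n+1}) \;=\; \frac{|\Fix\text{-having in }B_{n+1}|}{|B_{n+1}|} \;\le\; \frac{|\Fix\text{-having in }B_n|\cdot |B_{n+1}|/|B_n|}{|B_{n+1}|} \;=\; \FPP(B_n).
\]

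There is essentially no obstacle; the only subtle ingredient is the Galois-equivariance of $\varphi$, which is immediate from the fact that its coefficients lie in the base field $k$. This reduces the corollary to Theorem \ref{FPP theorem}(1).
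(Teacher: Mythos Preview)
Your proof is correct. The paper simply declares the corollary ``immediate'' from Theorem~\ref{FPP theorem}, relying on what is established inside that theorem's proof: the sequence $\Phi_{(B_1)_a}^n(0)$ is increasing to $1$, so once $G_n\cong[G]^n$ for every $n$ one has $\FPP((B_n)_a)=1-\Phi_{(B_1)_a}^n(0)<\epsilon$ for \emph{all} $n\ge N_\epsilon$, not just for $n=N_\epsilon$. You take a different route: you use only the \emph{statement} of Theorem~\ref{FPP theorem}(1) and supply your own monotonicity $\FPP(B_{n+1})\le\FPP(B_n)$ via the restriction map $B_{n+1}\to B_n$ and the $k$-equivariance of $\varphi$. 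This is a clean, self-contained argument that does not require reaching back into the indicatrix computation, and in fact your monotonicity holds without any wreath-product hypothesis on $G_n$. The paper's implicit route is shorter given what was already proved; yours is more robust and makes the deduction from the theorem as stated airtight.
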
 

Applying this result to the question of proportions of periodic points over finite fields, we are able to show the following. Note, part (a) of this theorem was shown in \cite[Proposition 6.4]{JKMT}. Let $\Norm(\fp)=\#\fo_k/\fp$ denote the norm of $\fp$. Note, $|\bP^1(\fo_k/\fp)|=\Norm(\fp)+1$.

\begin{theorem}\label{uni}
	Let $k$ be a number field, let $d >1$, and let $f(x) = x^d + c \in k[x]$ have the
	property that 0 is not preperiodic.  Then
	\begin{enumerate} 
		\item[(a)] \[ \liminf_{\Norm(\fp) \to \infty} \frac{\#\Per(f_\fp)}{\#\bP^1(\fo_k/\fp)}
		= 0;\] 
		\item[(b)] if $k$ contains a primitive $\ell$-th root of unity for $\ell | d$ with $\ell >1 $, then we have
		\[\lim_{\Norm(\fp) \to \infty} \frac{\#\Per(f_\fp)}{\#\bP^1(\fo_k/\fp)}
		= 0;\]
		\item[(c)] if $k$ does not contain a primitive $\ell$-th root of unity for any $\ell|d$ with $\ell>1$, 
		\[\limsup_{\Norm(\fp) \to \infty} \frac{\#\Per(f_\fp)}{\#\bP^1(\fo_k/\fp)}
		= 1.\]
		
	\end{enumerate}
\end{theorem}

We prove Theorem \ref{FPP theorem} in Section \ref{nongeometric}. Then in Section \ref{CS}, we prove Theorem \ref{uni} along with a more general result.

%%%%%%%%%%%%%%%%%%%%%%%%%%%%%%%%%%%%%%%%%%%%%%%%%%%%%%%

\section{Fixed Point Proportions}\label{nongeometric}

The arguments in this sections are similar to, but more general than, those of Lemmas 4.2 and 4.3 in \cite{odoni}.

\begin{definition}\label{probability argument} Let $\Gamma$ be a set of permutations acting on a finite set $S$. The  \emph{indicatrix} of $\Gamma$ is given by \[\Phi_\Gamma(x):= \frac 1{|\Gamma|} \sum_{\gamma \in \Gamma} x^{\tr \gamma}\] where $\tr \gamma := \#\{s \in S : \gamma(s)=s\}$. 
\end{definition}

\begin{lemma}\label{indicatrix} Let $a \in A=\Gal(E/k)$. If $G_n\cong [G]^n$, then \[\Phi_{(B_n)_a}(x)=\Phi_{(B_1)_a}^n(x).\]
\end{lemma}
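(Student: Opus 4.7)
The plan is to induct on $n$. The base case $n=1$ is a tautology. For the inductive step I aim to prove the one-step recursion
\[
\Phi_{(B_n)_a}(x)=\Phi_{(B_{n-1})_a}\bigl(\Phi_{(B_1)_a}(x)\bigr),
\]
which, combined with the inductive hypothesis $\Phi_{(B_{n-1})_a}(x)=\Phi_{(B_1)_a}^{n-1}(x)$, immediately yields the lemma.

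To set up the recursion I decompose the $d^n$ roots of $\varphi^n(x)-t$ according to their image under $\varphi$: for each root $\gamma$ of $\varphi^{n-1}(x)-t$, the fiber $\varphi^{-1}(\gamma)$ consists of $d$ roots of $\varphi^n(x)-t$, and these $d^{n-1}$ fibers partition the full root set. Any $\sigma\in B_n$ with restriction $T:=\sigma\vert_{K_{n-1}}\in B_{n-1}$ permutes these fibers according to the action of $T$ on the $\gamma$'s, and restricts to a self-permutation of $\varphi^{-1}(\gamma)$ exactly when $\gamma\in\Fix(T)$. Hence
\[
\tr(\sigma)=\sum_{\gamma\in\Fix(T)}\tr\bigl(\sigma\vert_{\varphi^{-1}(\gamma)}\bigr).
\]

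Next I apply the wreath product hypothesis. Viewing $[G]^n$ as the iterated group action on the depth-$n$ rooted $d$-ary tree, the surjection $[G]^n\to[G]^{n-1}$ onto the action on depth-$(n-1)$ leaves has kernel $G^{d^{n-1}}$, with one factor of $G$ acting on the children of each depth-$(n-1)$ vertex. Under $G_n\cong[G]^n$ this translates to $\Gal(K_n/K_{n-1})\cong G^{d^{n-1}}$, where the factor at $\gamma$ is the geometric Galois group of the splitting field $M_\gamma$ of $\varphi(x)-\gamma$ over $E(\gamma)$. Since $B_n/G_n\cong B_{n-1}/G_{n-1}\cong A$, the same group is the kernel of $B_n\to B_{n-1}$, so each fiber $P_T=\{\sigma\in(B_n)_a:\sigma\vert_{K_{n-1}}=T\}$ above $T\in(B_{n-1})_a$ has size $|G|^{d^{n-1}}$. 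Fixing any $\widetilde T\in P_T$, every $\sigma\in P_T$ is uniquely expressible as $\sigma=g\cdot\widetilde T$ with $g=(g_\gamma)\in G^{d^{n-1}}$, and for $\gamma\in\Fix(T)$ we obtain $\sigma\vert_{M_\gamma}=g_\gamma\cdot\widetilde T\vert_{M_\gamma}$; thus the restrictions to distinct fibers are independent, each uniform on the coset $G\cdot\widetilde T\vert_{M_\gamma}$.

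The final step is to identify this coset. Since $\gamma$ is transcendental over $k$ (as $t=\varphi^{n-1}(\gamma)$ is), the $k$-isomorphism $k(t)\to k(\gamma)$ sending $t\mapsto\gamma$ is well defined; extending to algebraic closures so that $\bar k$ is fixed pointwise gives a $k$-isomorphism $K_1\to M_\gamma$ under which $B_1$ corresponds to $\Gal(M_\gamma/k(\gamma))$ and $(B_1)_a$ corresponds to $G\cdot\widetilde T\vert_{M_\gamma}$ (because $\widetilde T$ restricts to $a$ on $E$). Hence by independence $\mathbb{E}_{\sigma\in P_T}[x^{\tr\sigma}]=\Phi_{(B_1)_a}(x)^{\tr(T)}$, and averaging over $T\in(B_{n-1})_a$ produces the recursion. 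I expect the main obstacle to be the two identifications above --- the wreath-product splitting $\Gal(K_n/K_{n-1})\cong\prod_\gamma\Gal(M_\gamma/E(\gamma))$ and the preservation of the $a$-coset under $t\leftrightarrow\gamma$ --- both of which rest on the wreath product hypothesis together with the fact that $K_n\cap\bar k=E$ for all $n$.
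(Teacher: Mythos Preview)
Your proposal is correct and follows essentially the same approach as the paper. Both arguments prove the one-step recursion $\Phi_{(B_n)_a}=\Phi_{(B_{n-1})_a}\circ\Phi_{(B_1)_a}$ by exploiting the wreath product structure to write each $\sigma\in(B_n)_a$ as a pair consisting of its restriction $T\in(B_{n-1})_a$ together with independent fiber data ranging over $(B_1)_a$; the paper does this by invoking the embedding $B_n\hookrightarrow[B_1]^n$ of Lemma~\ref{wreath subgroup} and a degree count $[K_n:K_{n-1}]=|G|^{d^{n-1}}$, while you unpack the same embedding explicitly via the specialization $t\mapsto\gamma$, but the content is identical.
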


\begin{proof} 
	By Lemma \ref{wreath subgroup}, we can embed $B_n$ in $[B_1]^n$. Thus, identifying $(B_n)_a$ with its image, we can write any $\sigma \in (B_n)_a$ as $(\pi;\tau_1,\ldots,\tau_{d^{n-1}}) \in ([B_1]^{n-1})[B_1]$ where $\pi\in B_{n-1}$, which is a subgroup of $[B_1]^{n-1}$, and $\tau_i\in B_1$. More specifically, since $\sigma$ restricts to $a$ on $E$ we must have $\pi\in (B_{n-1})_a$ and $\tau_i\in (B_1)_a$. Also, since $G_n\cong[G]^n$ we have $[K_n:K_{n-1}]=|G|^{d^{n-1}}=|(B_1)_a|^{d^{n-1}}$, so a simple degree argument shows $(B_n)_a =\{(\pi;\tau_1,\ldots,\tau_{d^{n-1}}):\pi\in(B_{n-1})_a, \tau_i\in (B_1)_a\}$. Let $S=\{\alpha_1,\ldots, \alpha_{d^{n-1}}\}$ be the set of roots of $\varphi^{n-1}(x)-t$. Then, we have 
	
	\begin{align*} 
	\Prob\big(\tr \sigma=k\big) &=\sum_l \Prob\big(\tr (\pi;\tau_1 ,\ldots,\tau_{d^{n-1}} )=k \;\vert \; \tr \pi  = l \big) \Prob\big(\tr \pi =l\big)\\
	&=\sum_l \Prob\bigg(\sum_{\substack{\alpha_i \in S \\ \pi (\alpha_i)=\alpha_i}}\tr\tau_i =k \;\vert \tr \pi = l\bigg) \Prob\big(\tr \pi =l\big)\\
	&=\sum_l \Prob\big(\tr\tau_1 +\ldots+\tr\tau_l  =k \;\vert\; \tau_j \in (B_1)_a\big) \Prob\big(\tr \pi =l\big).\\
	\end{align*}
	
	Clearly, for any set $\Gamma$, $\Phi_\Gamma(x)= \sum_k \Prob\big(\tr \gamma=k\big) x^k$, so 
	
	\begin{equation*}
	\begin{split} 
	\Phi_{(B_n)_a}(x) & = \sum_k \sum_l \Prob\big(\tr\tau_1 +\ldots+\tr\tau_l =k \;\vert\; \tau_j \in (B_1)_a\big) \Prob\big(\tr \pi =l\big) x^k \\
	&= \sum_l \Prob\big(\tr \pi =l\big) \sum_k \Prob\big(\tr\tau_1 +\ldots+\tr\tau_l  =k\;\vert\; \tau_j \in (B_1)_a\big) x^k \\
	&= \sum_l \Prob\big(\tr \pi =l\big) \bigg(\sum_m \Prob\big(\tr\tau  =m \;\vert\; \tau \in (B_1)_a\big) x^m\bigg)^l \\
	&= \sum_l \Prob\big(\tr \pi =l\big) \big(\Phi_{(B_1)_a}(x)\big)^l \\
	&= \Phi_{(B_{n-1})_a}\big(\Phi_{(B_1)_a}(x)\big).
	\end{split}
	\end{equation*}
	Thus, $\Phi_{(B_n)_a}(x)= \Phi_{(B_1)_a}^n(x)$ by induction on n.
\end{proof}

\begin{proof}[Proof of Theorem \ref{FPP theorem}] 
	
	As noted above, for any set $\Gamma$, we have \[\Phi_\Gamma(x)= \sum_k \Prob(\tr \gamma=k) x^k\] so the constant term of $\Phi_\Gamma(x)$ is the proportion of elements of $\Gamma$ with no fixed points. 
	
	First we suppose there is some $a \in A$ such that each element of $(B_1)_a$ has a fixed point. Let $\Phi(x):=\Phi_{(B_1)_a}(x)$, then $\Phi(0)=0$ so $\Phi^n(0)=0$ for all $n$. Since $G_N\cong [G]^N$ we can apply Lemma \ref{indicatrix} which implies that every $\sigma \in (B_N)_a$ must have a fixed point.  Thus, $\FPP((B_N)_a)=1$ and $\FPP(B_N) > \frac{|(B_N)_a|}{|B_N|}=\frac1{|A|}$.
	
	Now consider the case where $(B_1)_a$ has some fixed point free element for each $a \in A$. Fix an $a$ in $A$, and again let $\Phi(x):=\Phi_{(B_1)_a}(x)$. We will show $\lim_{n \rightarrow \infty} \Phi^n(0) =1$. 
	
	First, we show $\Phi'(1) =1$. Note, for any $\sigma \in (B_1)_a$ we have $(B_1)_a=G\sigma$. Let $S_1$ be the roots of $\varphi(x)-t$.
	\begin{align*}\Phi'(1) &=\frac 1{|G\sigma|}\sum_{g\in G} \tr g\sigma \\ & = \frac 1{|G|} |\{(g,\alpha)\in G\times S_1|g\sigma \alpha=\alpha\}| \\ &= \frac 1{|G|} \sum_{\alpha\in S_1} |\{g\in G|g\sigma \alpha=\alpha\}|\\
	\end{align*} 
	Note, $\{g\in G|g\sigma \alpha=\alpha\}=\Stab_G(\alpha)g_0$ where $g_0$ is any element of $\{g\in G|g\sigma \alpha=\alpha\}$. To see this, note $g \in \Stab_G(\alpha)$ if and only if $gg_0\sigma \alpha=g\alpha=\alpha$ if and only if $gg_0\in \{g\in G|g\sigma \alpha=\alpha\}$. Thus $|\{g\in G|g\sigma \alpha=\alpha\}|=|\Stab_G(\alpha)|$, and by the orbit/stabilizer theorem
	\begin{align*}
	\Phi'(1) = \frac 1{|G|} \sum_\alpha |\Stab_G(\alpha)|
	& = \frac 1{|G|} \sum_\alpha \frac{|G|}{|\Orb_G(\alpha)|} \\
	& =\sum_{\text{orbits }Y}\sum_{\alpha\in Y}\frac 1{|Y|} \\
	& =\sum_{\text{orbits }Y}1 \\
	& =1,
	\end{align*}
	since $G$ is transitive. 
	
	It is easy to see that $\Phi(1)=\frac 1{|(B_1)_a|}\sum_{\sigma\in (B_1)_a} 1^{\tr \sigma}=1$ and the first and second derivatives of $\Phi(x)$ are positive on $(0,1]$, so the graph of $\Phi(x)$ is concave up and increasing on $(0,1]$.
	
	Since $(B_1)_a$ has some fixed point free element, $\Phi(0)>0$. So the graph of $\Phi(x)$ must lie above the graph of $y=x$ on the interval $[0,1)$. Thus, $0 \leq x <\Phi(x)<1$ for $0\leq x <1$. Then $\{\Phi^n(0)\}_n$ is a strictly increasing sequence and is bounded above by $1$ so it must converge. Since $\lim_{n\rightarrow \infty} \Phi^n(0)$ must be a fixed point of $\Phi(x)$, this limit must be $1$. 
	
	Fix $\epsilon>0$. For each transitive subgroup $H$ of $S_d$ and $\sigma$ in $S_d$ such that $H\sigma$ has some element with no fixed points, the above argument implies there is some $n(H,\sigma)$ such that ${\Phi_{H\sigma}}^{n(H,\sigma)}(0)>1-\epsilon$. Let $N$ be the maximum of all $n(H,\sigma)$ taken over all $(H,\sigma)$ where $H$ is a transitive subgroup of $S_d$ and $\sigma\in S_d$ such that $H\sigma$ has an element with no fixed points. 
	
	Then for each $a$, taking any $\sigma \in (B_1)_a$ we have 
	\[\FPP((B_N)_a)=(1-\Phi_{(B_N)_a}(0))=(1-{\Phi_{(B_1)_a}}^N(0))=(1-{\Phi_{G\sigma}}^N(0))<\epsilon.\]
	This implies 
	\[\FPP(B_N)=\frac1 {|A|}\sum_{a\in A} \FPP((B_N)_a)<\epsilon.\]
\end{proof}

Corollary \ref{generalFPP} follows immediately. We can also see that the following lemma from \cite{odoni}, follows as an immediate corollary to Theorem \ref{FPP theorem}. We will use each of these in the proof our main theorem on proportions of periodic points.

\begin{corollary}[\cite{odoni}, Lemma 4.3]\label{indi}
	If $G_n\cong [G]^n$ for all $n$, then \[\lim_{n\rightarrow \infty} \FPP(G_n)=0.\]
\end{corollary}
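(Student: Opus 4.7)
The plan is to deduce this from Theorem \ref{FPP theorem}(1) by replacing the base field $k$ with $E$. Since $E = K_1 \cap \bar k$ is algebraically closed in $K_1$, repeating the construction over $E$ produces new invariants: the new ``$A$'' is $\Gal(E/E) = \{1\}$, the new ``$G$'' is $\Gal(K_1/E(t)) = G$, the new ``$G_n$'' is still $G_n$, and the new ``$B_n$'' is $\Gal(K_n/E(t)) = G_n$. The wreath-product hypothesis $G_n \cong [G]^n$ transfers verbatim. Hence once the remaining hypothesis of Theorem \ref{FPP theorem}(1) is checked, we obtain, for each $\epsilon > 0$, some $N_\epsilon$ with $\FPP(G_{N_\epsilon}) < \epsilon$.

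The remaining hypothesis is that the unique coset of the new $B_1/G = G/G$, namely $G$ itself, contains a fixed-point-free element on the roots of $\varphi(x) - t$. I would verify this in two short steps. First, $G$ acts transitively on the roots: writing $\varphi(x) - t = p_1(x) - t q_1(x)$, the polynomial is linear in $t$, hence irreducible in $E[x,t]$, and therefore irreducible in $E(t)[x]$; so $G = \Gal(K_1/E(t))$ permutes the $d \geq 2$ roots transitively. Second, by Jordan's classical theorem, every transitive permutation group on a set of cardinality at least $2$ contains an element with no fixed points.

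To upgrade ``$\FPP(G_{N_\epsilon}) < \epsilon$ for every $\epsilon$'' to the limit statement, I would appeal to the monotonicity inherent in the proof of Theorem \ref{FPP theorem}: with $\Phi = \Phi_G$, one has $\FPP(G_n) = 1 - \Phi^n(0)$, and the proof shows $\{\Phi^n(0)\}_n$ is strictly increasing with limit $1$. Thus $\FPP(G_n)$ decreases monotonically to $0$.

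The main (and minor) obstacle is recognizing Jordan's theorem as the correct input for producing a derangement in the transitive subgroup $G \leq S_d$; apart from that, the argument is purely bookkeeping around the base change from $k$ to $E$.
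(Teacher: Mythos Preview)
Your proof is correct and follows essentially the same approach as the paper: both reduce to applying Theorem~\ref{FPP theorem} (in the trivial-$A$ situation $B_n=G_n$) after verifying that the transitive group $G$ contains a derangement. The paper obtains the derangement via Burnside's lemma (average trace $=1$, identity has trace $d>1$), which is precisely the standard proof of the Jordan theorem you invoke; your extra care with the base change to $E$ and the monotonicity of $\Phi^n(0)$ makes explicit what the paper leaves implicit.
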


\begin{proof}
	By Theorem \ref{FPP theorem} it suffices to show that $G$ must contain a fixed point free element. To see this, note that by Burnside's lemma \[\frac{1}{|G|}\sum_{g\in G} \tr g =1,\]
	since $G$ is transitive. So the average of $\tr g$ as $g$ varies over $G$ is $1$, and $G$ contains an element, namely the identity, with trace $d>1$. Thus, $G$ must contain an element with trace $0$.
\end{proof}

%%%%%%%%%%%%%%%%%%%%%%%%%%%%%%%%%%%%%%%%%%%%%%%%%%%%%%%

\section{Proportions of Periodic Points}\label{CS}

Let $\varphi(x)$ be a rational function of degree $d>1$ defined over a number field $k$. 
Let $p(x), q(x) \in \fo_k[x]$ such that
$\varphi(x) = p(x)/q(x)$ and let $P(X,Y)$ and $Q(X,Y)$ be the degree $d$ homogenizations of $p$ and $q$ respectively; that is, $P(X,Y) = Y^d p(X/Y)$ and $Q(X,Y) = Y^d q(X/Y)$.  

If $\fp$ is a nonzero prime in the ring of integers $\fo_k$, we say that the rational function $\varphi(x)$,
defined as above, has \textit{good reduction} at $\fp$ if we can choose $p$ and $q$ such that at least one coefficient of $p$ or $q$ has $\fp$-adic absolute value $1$, and $P(X,Y)$ and $Q(X,Y)$ have no common zeros $[\alpha,\beta]\in \bP^1(\fo_k/\fp)$.

%Define $P_1 = P$ and
%$Q_1 = Q$ and define $P_n$ and $Q_n$ by $P_n(X,Y) =
%P(P_{n-1}(X,Y), Q_{n-1}(X,Y))$ and $Q_n(X,Y) = Q(P_{n-1}(X,Y),
%Q_{n-1}(X,Y))$ for $n \geq 1$.  Then, if we set $p_k = P_k(X,1)$ and $q_k
%= Q_k(X,1)$, we have $\varphi^n(x)=\frac{p_n(x)}{q_n(x)}$. From this we can see $\varphi^n(x)$ has good reduction if $\varphi(x)$ has good reduction.

For any prime $\fp$ of $\fo_k$ of good reduction we consider the rational function $\varphi_\fp(x)$, the reduction of $\varphi$ modulo $\fp$. We examine the proportion of elements of $\bP^1(\fo_k/\fp)$ which are periodic points of $\varphi_\fp$. 

\begin{definition} Let $T:\cU\rightarrow \cU$ be any map of a set $\cU$ to itself. For $u \in \cU$ define $T^0(u)=u$ and $T^n=T(T^{n-1}(u))$. We say that $u$ is \emph{periodic} if $T^k(u)=u$ for some $k \in \mathbb N$ and we say $u$ is \emph{preperiodic} if $T^k(u)$ is periodic for some $k \in \mathbb Z_{\geq 0} $. We denote the set of periodic points $\Per(T)$ if the set $\cU$ is clear from the context. 
\end{definition}

We first note, if $\fp$ is a prime of good reduction for
$\varphi$, then the number of periodic points for $\varphi_\fp$
is bounded above by $\# \varphi_\fp^n (\bP^1(\fo_k/\fp))$ for any $n$.   %This follows from the following lemma, proven here for completeness.

\begin{lemma}[\cite{JKMT}, Lemma 5.2]\label{S}  If $\cU$ is finite then every point of $\cU$ is
	preperiodic and $\Per(T)=\cap_{n=0}^\infty T^n(\cU)$. In
	particular, $\#\Per(T) \leq \# T^n(\cU)$ for any positive
	integer $n$.  
\end{lemma}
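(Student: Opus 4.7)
The plan is to first establish preperiodicity via the pigeonhole principle, then prove the set equality $\Per(T) = \bigcap_{n=0}^\infty T^n(\cU)$ by showing the two inclusions separately, and finally read off the cardinality bound.

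First I would verify that every $u \in \cU$ is preperiodic. The sequence $u, T(u), T^2(u), \ldots$ lies in the finite set $\cU$, so there must exist indices $i < j$ with $T^i(u) = T^j(u)$. Setting $v = T^i(u)$ then gives $T^{j-i}(v) = v$, so $v$ is periodic and $u$ is preperiodic.

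Next I would prove the inclusion $\Per(T) \subseteq \bigcap_{n} T^n(\cU)$. Suppose $T^k(u) = u$ for some $k \geq 1$. Given any $n \geq 0$, pick a multiple $mk$ of $k$ with $mk \geq n$. Then $u = T^{mk}(u) = T^n(T^{mk-n}(u)) \in T^n(\cU)$, and this holds for every $n$.

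For the reverse inclusion, the key observation is that $T^{n+1}(\cU) \subseteq T^n(\cU)$, so $\{T^n(\cU)\}_{n \geq 0}$ is a decreasing chain of finite sets and therefore stabilizes: there exists $N$ with $T^n(\cU) = T^N(\cU)$ for all $n \geq N$. Let $V = T^N(\cU) = \bigcap_{n \geq 0} T^n(\cU)$. Then $T(V) = T(T^N(\cU)) = T^{N+1}(\cU) = V$, so $T$ restricts to a surjection $V \to V$ on a finite set, hence a bijection. Any bijection of a finite set decomposes into cycles, so every element of $V$ is periodic; thus $V \subseteq \Per(T)$. Combined with the previous inclusion this yields the desired equality.

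The cardinality bound $\#\Per(T) \leq \#T^n(\cU)$ then follows immediately since $\Per(T) \subseteq T^n(\cU)$ for every $n$. There is no real obstacle in this argument; the only mildly clever point is noticing that the decreasing chain of images must stabilize and that $T$ acts as a bijection on the stable image.
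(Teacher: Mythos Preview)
Your proof is correct and is the standard argument for this elementary fact. The paper itself does not supply a proof of this lemma; it is simply quoted as \cite[Lemma 5.2]{JKMT} and used as a black box, so there is no paper proof to compare against. Your argument---pigeonhole for preperiodicity, periodic points lie in every image, and stabilization of the decreasing chain $T^n(\cU)$ forcing $T$ to be a bijection on the eventual image---is exactly how one would expect the cited lemma to be proved.
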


%\begin{proof}
%Suppose $\cU$ is finite and let $u\in \cU$. By the pigeonhole principle we must have $T^m(u)=T^n(u)$ for some $m, n>0$ with $m\neq n$. Thus, $u$ is preperiodic. 
%
%Suppose  $u\in \cU$ is periodic, so we can write $T^k(u)=u$ for some $k>0$. Then for any $n\in \bN$, $T^{nk}(u)=u$ and we have $u\in T^{nk}(\cU)=T^n(T^{n(k-1)}(\cU))\subseteq T^n(\cU)$. Hence, $u\in T^n(\cU)$ for all $n$.
%	
%Now suppose $u\in\cap_{n=0}^\infty T^n(\cU)$. Then for each $i$ there is some $a_i\in\cU$ with $T^i(a_i)=u$. Since $\cU$ is finite, the pigeonhole principle implies that there exist $i,j$, with $i<j$, such that $a_i=a_j$. Then we have
%\[u=T^j(a_j)=T^{j-i}(T^i(a_j))=T^{j-i}(T^i(a_i))=T^{j-i}(u)\]
%and hence $u$ is periodic.
%\end{proof}

Let $k$, $\varphi(x)$ be as above, and let $K = k(t)$.  For $n\in\mathbb{Z}^+$, let $K_n$ be a splitting field of $\varphi^n(x)
- t$ over $K$, $B_n=\Gal(K_n/K)$, $E_n=K_n\cap \overline{k}$, and $G_n=\Gal(K_n/E_n(t))$ (note, this is slightly different from our earlier notation).

\begin{proposition}\label{for_use}
	Let $\delta > 0$.  There is a constant
	$M_\delta$ such that for all $\fp$ with $\Norm(\fp) > M_\delta$, we have
	\begin{equation}\label{proportion}
	\frac{\#\Per(\varphi_\fp)}{\#\bP^1(\fo_k/\fp)} \leq [E_n:k]\FPP(B_n) + \delta.
	\end{equation}
\end{proposition}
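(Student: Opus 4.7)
The starting point is Lemma \ref{S}: for the fixed $n$ in the statement,
\[
\#\Per(\varphi_\fp) \leq \#\varphi_\fp^n(\bP^1(\fo_k/\fp)),
\]
so it is enough to bound the density of the image of $\varphi^n_\fp$. An element $\alpha \in \bP^1(\fo_k/\fp)$ lies in this image exactly when $\varphi^n(x) - \alpha$ has a root in $\bP^1(\fo_k/\fp)$, so we are really counting specializations $t = \alpha$ at which the cover defined by $\varphi^n(x) - t$ acquires an $\fo_k/\fp$-rational point in its fiber.

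Next I would give this count a Galois-theoretic interpretation. Let $Y_n$ be the normalization of $\bP^1_k$ in $k(t,\beta)$, where $\beta$ is a root of $\varphi^n(x) - t$, so that $Y_n \to \bP^1_k$ has generic degree $d^n$ and its Galois closure corresponds to $K_n/k(t)$. Throwing away the finite set of primes of bad reduction of this cover together with the branch divisor, each remaining $\alpha \in \bP^1(\fo_k/\fp)$ determines a well-defined Frobenius conjugacy class $\sigma_\alpha$ in $B_n$. Because $K_n \cap \overline{k} = E_n$, every such $\sigma_\alpha$ restricts on $E_n$ to the global Frobenius $a_\fp := \Frob_\fp|_{E_n}$, so $\sigma_\alpha \in (B_n)_{a_\fp}$; moreover the fiber over $\alpha$ has an $\fo_k/\fp$-rational point precisely when $\sigma_\alpha$ has a fixed point in its action on the roots of $\varphi^n(x) - t$.

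The heart of the argument is then an effective Chebotarev/Lang--Weil estimate, applied to the intermediate curves $Y_n/H$ for point-stabilizers $H \leq B_n$ (this is the function-field Chebotarev argument carried out explicitly in \cite{JKMT}). It yields
\[
\frac{\#\varphi_\fp^n(\bP^1(\fo_k/\fp))}{\#\bP^1(\fo_k/\fp)} \;\leq\; \FPP((B_n)_{a_\fp}) \;+\; \frac{C_n}{\sqrt{\Norm(\fp)}},
\]
for a constant $C_n$ depending only on $n$, $\varphi$, and $k$. Since $\FPP(B_n) = \frac{1}{|A_n|}\sum_{a \in A_n} \FPP((B_n)_a)$ and all summands are non-negative,
\[
\FPP((B_n)_{a_\fp}) \;\leq\; \sum_{a \in A_n}\FPP((B_n)_a) \;=\; [E_n:k]\,\FPP(B_n),
\]
and picking $M_\delta$ so that $C_n/\sqrt{\Norm(\fp)} < \delta$ whenever $\Norm(\fp) > M_\delta$ gives \eqref{proportion}.

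The main obstacle is the Chebotarev/Lang--Weil step: identifying the constant-field extension $E_n$ so that the Frobenius of a specialization $t = \alpha$ is guaranteed to land in the coset $(B_n)_{a_\fp}$, dealing uniformly with the finitely many primes of bad reduction and with the branch locus, and then invoking the Weil bound on $\#C(\F_q)$ for each quotient curve in order to turn the Galois-theoretic count into an actual estimate with $\sqrt{\Norm(\fp)}$ error. This is exactly the machinery imported from \cite{JKMT}; once it is in place, the uniform bound $\FPP((B_n)_{a_\fp}) \leq [E_n:k]\FPP(B_n)$ is elementary and the rest of the proof is bookkeeping.
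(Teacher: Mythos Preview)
Your plan is correct and follows the same overall strategy as the paper: reduce via Lemma~\ref{S} to bounding $\#\varphi_\fp^n(\bP^1(\fo_k/\fp))$, read image membership as a Frobenius fixed-point condition (the paper's Lemma~\ref{Frob2}), apply an explicit function-field Chebotarev estimate, and then dominate the coset-level FPP by $[E_n:k]\,\FPP(B_n)$.

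The bookkeeping is organized a bit differently. The paper works throughout with the \emph{reduced} group $B_{n,\fp}=\Gal((K_n)_\fp/\F_q(t))$: it invokes Lemma~\ref{EGA} to force $G_{n,\fp}\cong G_n$ for all but finitely many $\fp$, applies the Chebotarev bound of \cite{murty} to get main term $m\,\FPP(B_{n,\fp})$ (where $m$ is the constant-field degree), and only then embeds $B_{n,\fp}\hookrightarrow B_n$ via Lemma~\ref{specialization} to obtain $m\,\FPP(B_{n,\fp})\le [E_n:k]\,\FPP(B_n)$ by summing over cosets of $G_n$. You instead pass directly to the global group, noting that every Frobenius lands in the single coset $(B_n)_{a_\fp}$ and bounding $\FPP((B_n)_{a_\fp})\le\sum_a\FPP((B_n)_a)=[E_n:k]\,\FPP(B_n)$. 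Your intermediate bound is in fact slightly sharper, but be aware that the step you absorb into ``throwing away primes of bad reduction of the cover'' is exactly the content of Lemma~\ref{EGA}: without knowing $G_{n,\fp}\cong G_n$, Chebotarev only equidistributes Frobenius in a coset of the possibly smaller $G_{n,\fp}$, and the FPP of such a sub-coset need not be bounded by $\FPP((B_n)_{a_\fp})$.
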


In order to prove Proposition \ref{for_use} we must consider how the Galois groups of the splitting fields of iterates of rational maps behave under reduction. The following two results from \cite{odoni} and \cite{JKMT} are in this direction.  Where convenient we use $\Gal(\psi(x)/K)$ to denote the Galois group of the splitting field of $\psi(x)$ over $K$ in some algebraic closure of $K$.

\begin{lemma}[\cite{odoni}, Lemma 2.4]\label{specialization} Let $A$  be an integrally closed domain with field of fractions $K$, let $K'$ be any field, and let $\psi:A\rightarrow K'$ be a ring homomorphism. Define $\widetilde{\psi} :A[x] \rightarrow K'[x]$ by $a_dx^d+a_{d-1}x^{d-1}+\ldots+a_0 \mapsto \psi(a_d)x^d+\psi(a_{d-1})x^{d-1}+\ldots+\psi(a_0)$. If $f(x)=a_dx^d+\ldots+a_0$ is a polynomial in $A[x]$ with $d\geq 1$ and $a_d \notin \ker(\psi)$, such that $\widetilde{\psi}(f(x))$ is separable over $K'$ then $f(x)$ is separable over $K$ and $\Gal(\widetilde{\psi}(f(x))/K')$ is isomorphic to a subgroup of $\Gal(f(x)/K)$. Also, if $\widetilde{\psi}(f(x))$ is irreducible over $K'$ then $f(x)$ is irreducible over $K$.
\end{lemma}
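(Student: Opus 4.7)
The plan is to argue by specialization: reduce to the case where the leading coefficient is a unit, build a ring homomorphism from the integral closure of the base into $\overline{K'}$ compatible with $\psi$, and then compare minimal polynomials of a generic primitive element over $K$ and over $K'$. First I would replace $A$ with $R := A[a_d^{-1}]$, which remains an integrally closed domain with fraction field $K$; since $\psi(a_d)$ is a unit in $K'$, $\psi$ extends to $R \to K'$ with kernel a prime $\mathfrak{p}$. Let $L/K$ be a splitting field of $f$, let $\alpha_1,\dots,\alpha_d \in L$ be its roots, and let $C$ denote the integral closure of $R$ in $L$; each $\alpha_i$ lies in $C$ because $a_d^{-1}f(x)$ is monic over $R$. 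By the lying-over theorem, pick a prime $\mathfrak{P}$ of $C$ above $\mathfrak{p}$, and compose $C \twoheadrightarrow C/\mathfrak{P}$ with an embedding of the algebraic extension $\operatorname{Frac}(C/\mathfrak{P})$ of $R/\mathfrak{p}$ into $\overline{K'}$ to obtain a ring homomorphism $\Psi : C \to \overline{K'}$ that restricts to $\psi$ on $R$. Set $\bar\alpha_i := \Psi(\alpha_i)$, so $\widetilde\psi(f)(\bar\alpha_i) = 0$. Separability of $f$ is then immediate: separability of $\widetilde\psi(f)$ forces its $d$ roots to be distinct, so the $\bar\alpha_i$ are distinct and hence so are the $\alpha_i$.

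For the subgroup embedding, write $G := \Gal(L/K)$ and $G' := \Gal(L'/K')$ with $L' := K'(\bar\alpha_1,\dots,\bar\alpha_d)$. Pick scalars $c_1,\dots,c_d$ in the prime subring of $K$ generic enough that both families $\{\sum_i c_i \alpha_{\pi(i)}\}_{\pi \in S_d}$ and $\{\sum_i c_i \bar\alpha_{\pi(i)}\}_{\pi \in S_d}$ consist of $d!$ distinct elements (only finitely many hyperplane conditions, avoidable since $K$ is infinite in the paper's applications). Then $\theta := \sum_i c_i \alpha_i$ is a primitive element for $L/K$ and $\bar\theta := \Psi(\theta)$ is a primitive element for $L'/K'$. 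Since $R$ is integrally closed, the minimal polynomial $m_\theta(x)$ of $\theta$ lies in $R[x]$, and its factorization $m_\theta(x) = \prod_{\sigma \in G}(x - \sigma(\theta))$ takes place in $C[x]$. Applying $\Psi$ coefficientwise yields $\widetilde\psi(m_\theta)(x) = \prod_{\sigma \in G}\bigl(x - \sum_i c_i \bar\alpha_{\sigma(i)}\bigr)$, a product of $|G|$ distinct linear factors over $\overline{K'}$. Because the minimal polynomial $m_{\bar\theta}(x) \in K'[x]$ of $\bar\theta$ divides $\widetilde\psi(m_\theta)(x)$, each conjugate $\tau(\bar\theta)$ for $\tau \in G'$ must equal $\sum_i c_i \bar\alpha_{\sigma_\tau(i)}$ for a uniquely determined $\sigma_\tau \in G$. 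The assignment $\tau \mapsto \sigma_\tau$ is a group homomorphism (composition of induced permutations) and injective (because $\bar\theta$ is primitive), producing the desired inclusion $G' \hookrightarrow G$. For the irreducibility claim, if $\widetilde\psi(f)$ is irreducible over $K'$ then $G'$ acts transitively on $\{\bar\alpha_i\}$, and via the embedding together with the identification $\alpha_i \leftrightarrow \bar\alpha_i$, the image of $G'$ acts transitively on $\{\alpha_i\}$, forcing $G$ to be transitive and $f$ to be irreducible over $K$.

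The main obstacle is producing an honest subgroup embedding rather than just a subquotient. A naive decomposition-group argument shows that inertia at $\mathfrak{P}$ is trivial (precisely because the $\bar\alpha_i$ are distinct), but it exhibits $G'$ only inside $\Gal(\kappa(\mathfrak{P})/\kappa(\mathfrak{p}))$, which does not match the ambient $\Gal(L'/K')$ on the nose (both $\kappa(\mathfrak{p}) \subsetneq K'$ and $\kappa(\mathfrak{P}) \supsetneq L'$ are possible). The primitive-element resolvent above sidesteps this by reducing the whole question to a divisibility of minimal polynomials in $K'[x]$, and the genericity of the $c_i$ then translates this divisibility into a literal inclusion of permutation subgroups of $S_d$.
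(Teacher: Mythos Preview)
The paper does not supply its own proof of this lemma; it is quoted verbatim from Odoni. Your argument follows the classical resolvent approach (build a ring map $\Psi:C\to\overline{K'}$, compare minimal polynomials of a primitive element before and after reduction), and the overall architecture is sound. There is, however, a genuine gap in the step where you choose scalars $c_1,\dots,c_d$ in the prime subring so that both families $\{\sum_i c_i\alpha_{\pi(i)}\}_\pi$ and $\{\sum_i c_i\bar\alpha_{\pi(i)}\}_\pi$ consist of $d!$ distinct elements. Your parenthetical ``avoidable since $K$ is infinite'' is not the relevant hypothesis: what you actually need is that the \emph{image} of the prime subring under $\psi$ is large enough to avoid the $\binom{d!}{2}$ hyperplanes over $\overline{K'}$. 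In the very situation where the paper invokes this lemma, $K=k(t)$ has characteristic $0$ while $K'=\F_q(t)$ has characteristic $p$, so $\psi$ sends $\bZ$ onto $\F_p$; when $p$ is small relative to $d$ (and nothing in the statement or in the paper's use rules this out) there may be no admissible $(c_1,\dots,c_d)\in\F_p^d$.

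The standard repair---and what Odoni does---is to replace the scalars $c_i$ by algebraically independent indeterminates $T_1,\dots,T_d$. One works with $\theta=\sum_i T_i\alpha_i\in C[T_1,\dots,T_d]$, whose minimal polynomial over $K(T_1,\dots,T_d)$ is $\prod_{\sigma\in G}\bigl(Y-\sum_i T_i\alpha_{\sigma(i)}\bigr)\in R[T_1,\dots,T_d][Y]$ (here $R[T_1,\dots,T_d]$ is still integrally closed). After applying $\Psi$, the factors $\sum_i T_i\bar\alpha_{\sigma(i)}$ are automatically pairwise distinct because the $\bar\alpha_i$ are distinct and the $T_i$ are indeterminates; comparing $T_i$-coefficients then forces $\pi_\tau\in G$ for every $\tau\in G'$ with no genericity hypothesis needed. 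With this modification your proof goes through in the stated generality. One further remark: your treatment of the irreducibility clause routes through the Galois embedding and hence through the separability hypothesis, but the clause as stated is independent of separability. A direct argument suffices: passing to $R=A[a_d^{-1}]$, any monic factorization $a_d^{-1}f=g_1h_1$ in $K[x]$ already lies in $R[x]$ (integral closure), and then $\widetilde\psi(f)=\psi(a_d)\widetilde\psi(g_1)\widetilde\psi(h_1)$ exhibits a nontrivial factorization over $K'$.
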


Now, let $R$ be a Noetherian integral domain of characteristic 0 and let $A$ be a finitely
generated $R$-algebra that is an integrally closed domain.  Let $h(x) \in A[x]$ be a
nonconstant polynomial that is irreducible in $F(A)[x]$. Here $F(D)$ denotes the field of fractions of an integral domain $D$. Let $B
= A[\theta_1, \dots, \theta_n]$ where $\theta_i$ are the roots of $h$
in some splitting field for $h$ over $F(A)$.  
Suppose that $F(R)$ is algebraically closed in both $F(A)$ and $F(B)$. For $\fp\in \Spec R$, let $(A)_\fp$  denote $A/\fp A
\otimes_R F(A/\fp)$ and let $h_\fp$ denote the image of $h \in (A)_\fp[x]$ under the
reduction map from $A$ to $(A)_\fp$.

\begin{lemma}[\cite{JKMT}, Proposition 4.1]\label{EGA}   There is a nonempty Zariski-dense open subset $W$ of $\Spec R$ such that for $\fp \in W$, we have $(A)_\fp$ is an integral domain, there is a bijection between the roots of $h(x)$ and the roots of $h_\fp(x)$, and the action of $\Gal (h_\fp(x) / F((A)_\fp))$ on the roots of $h_\fp(x)$ is isomorphic to the action of $\Gal(h(x) / F(A))$ on the roots of $h(x)$. 
\end{lemma}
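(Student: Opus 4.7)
The plan is to build $W$ by intersecting three Zariski-dense open subsets of $\Spec R$ chosen so that (i) the fibers $(A)_\fp$ and $(B)_\fp$ are integral domains, (ii) $h_\fp$ is separable of the correct degree, and (iii) the generic rank $[F(B):F(A)]$ is preserved in every fiber. As a first step, let $a\in A$ be the leading coefficient of $h$ and $D\in A$ its discriminant; since $h$ is irreducible over $F(A)$ and $F(A)$ has characteristic zero, $h$ is separable and $D\neq 0$, and $a\neq 0$ by assumption. Because $A$ is a domain, the image of a nonzero element of $A$ in $(A)_\fp$ is nonzero for $\fp$ lying in a dense open of $\Spec R$ (a consequence of generic flatness of $A$ over $R$), so I obtain a dense open $W_1\subset \Spec R$ on which both $a$ and $D$ reduce to nonzero elements of $(A)_\fp$; on $W_1$ the polynomial $h_\fp$ therefore has degree $d=\deg h$ and is separable over $F((A)_\fp)$.

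Next I would spread out geometric integrality and flatness. The hypothesis that $F(R)$ is algebraically closed in both $F(A)$ and $F(B)$, combined with characteristic zero, implies that the generic fibers $F(R)\otimes_R A$ and $F(R)\otimes_R B$ are geometrically integral over $F(R)$; by the constructibility theorem for geometric integrality of fibers (EGA IV, 9.7.7), the locus where $(A)_\fp$ and $(B)_\fp$ are integral domains contains a dense open $W_2\subset W_1$. The $A$-subalgebra $B':=A[a^{-1},D^{-1}][\theta_1,\dots,\theta_n]$ of $F(B)$ is finite over $A[a^{-1},D^{-1}]$ because each $a\theta_i$ is integral over $A[a^{-1}]$, and it has generic rank $[F(B):F(A)]$ over $A[a^{-1},D^{-1}]$. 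Applying generic flatness (EGA IV, 6.9.1) to $A$ and to $B'$ as $R$-algebras then yields a dense open $W_3\subset W_2$ such that both are $R$-flat and $B'$ is a locally free $A[a^{-1},D^{-1}]$-module of constant rank $[F(B):F(A)]$; in particular $(B')_\fp$ is free of that rank over $(A)_\fp$ for every $\fp\in W_3$.

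I then set $W:=W_3$ and read off the conclusions. For $\fp\in W$, $(A)_\fp$ is a domain and the reductions $\bar\theta_1,\dots,\bar\theta_d$ lie in the domain $(B)_\fp$ and satisfy $h_\fp$; they must be pairwise distinct since $h_\fp$ has at most $d$ roots in $F((B)_\fp)$, giving the asserted bijection of root sets. The field $F((B)_\fp)$ is generated over $F((A)_\fp)$ by these roots, so it is a splitting field of $h_\fp$ and $|\Gal(h_\fp/F((A)_\fp))|=[F((B)_\fp):F((A)_\fp)]$; by the rank computation above this equals $[F(B):F(A)]=|\Gal(h/F(A))|$, while Lemma \ref{specialization} supplies an injection $\Gal(h_\fp/F((A)_\fp))\hookrightarrow \Gal(h/F(A))$ compatible with the bijection of roots. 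Equality of orders forces this injection to be an isomorphism of permutation groups. The main obstacle will be the bookkeeping of applying several spreading-out theorems to two different algebras simultaneously while keeping track of the separability and leading-coefficient hypotheses on $h_\fp$; once the correct sequence of Zariski-open shrinkings of $\Spec R$ is in place, the Galois-theoretic conclusion follows formally from Lemma \ref{specialization} and the degree equality.
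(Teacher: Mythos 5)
First, a point of comparison: the paper does not prove Lemma \ref{EGA} at all --- it is imported verbatim from [JKMT, Proposition~4.1] --- so there is no in-paper argument to measure yours against. On its own terms, your proposal follows the standard spreading-out strategy one would expect for this statement: shrink $\Spec R$ so that the fibers of $A$ and $B$ are domains and $h_\fp$ remains separable of degree $d$, then squeeze $\Gal(h_\fp/F((A)_\fp))$ between the upper bound supplied by Lemma \ref{specialization} and a lower bound coming from a rank count on the fibers of $B$. The constructions of $W_1$ and $W_2$ are correct.

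The gap is in $W_3$. Generic flatness (EGA IV, 6.9.1) applied to $B'$ as an $R$-algebra produces $f\in R$ with $B'_f$ flat (even free) as an $R_f$-module; it says nothing about the $A[a^{-1},D^{-1}]$-module structure of $B'$, and inverting elements of $R$ cannot in general kill the locus in $\Spec A[a^{-1},D^{-1}]$ where $B'$ fails to be locally free, since that locus need not map into a proper closed subset of $\Spec R$. So ``$B'$ is a locally free $A[a^{-1},D^{-1}]$-module of constant rank'' does not follow from the result you cite. Fortunately the inequality you actually need, $[F((B)_\fp):F((A)_\fp)]\geq N:=[F(B):F(A)]$, is available more cheaply: $B'$ is a finite module over the domain $A':=A[a^{-1},D^{-1}]$, so by Nakayama the function $\fq\mapsto\dim_{F(A'/\fq)}\bigl(B'\otimes_{A'}F(A'/\fq)\bigr)$ is upper semicontinuous on $\Spec A'$; the closed locus where it is $\geq N$ contains the generic point and hence equals all of the irreducible space $\Spec A'$. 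Taking $\fq$ to be the kernel of $A'\to F((A')_\fp)$ and using that $(B')_\fp$ is a domain, hence a torsion-free finite $(A')_\fp$-module, gives $[F((B)_\fp):F((A)_\fp)]=\dim_{F((A')_\fp)}\bigl(B'\otimes_{A'}F((A')_\fp)\bigr)\geq N$. (Alternatively, one can show that $B'$ genuinely is finite \'etale over $A'$, using normality of $A$ together with the invertibility of $a$ and $D$; but that requires an argument, not a citation of generic flatness.) You should also justify the injectivity of $(A)_\fp\to(B)_\fp$ on $W$, which you use implicitly when you treat $F((A)_\fp)$ as a subfield of $F((B)_\fp)$. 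With these repairs the proof goes through, and the final step --- equality of orders promoting the embedding of Lemma \ref{specialization}, compatible with the bijection $\theta_i\mapsto\bar\theta_i$, to an isomorphism of permutation groups --- is correct.
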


We now fix some notation that will be used in the proof of Proposition \ref{for_use} and in the next lemma.  Let $\fp \in \Spec \fo_k$ be a prime of good reduction for $\varphi$
and let $\F_q$ denote its residue field $\fo_k / \fp$.  We let
$\varphi_\fp$ denote the reduction of $\varphi$ modulo $\fp$, 
%let $(K)_\fp$ denote $\F_q(t)$, 
let
$(K_n)_\fp$ denote the splitting field of $\varphi_\fp^n(x) - t$, and let $B_{n,\fp}=\Gal((K_n)_\fp/\F_q(t))$. 

Let $\tau$ be a degree one prime of $\F_q(t)$, that is, a degree one prime in $\F_q[t]$ or $\F_q[\frac{1}{t}]$, such that $\tau$ does not ramify in $(K_n)_\fp$. Then 
for each prime $\fm$ in $(K_n)_\fp$ lying over $\tau$, there is a unique
Frobenius element $\Frob(\fm/\tau)$ such that
$\Frob(\fm/\tau)$ fixes $\fm$ and acts as $x \mapsto x^q$ on
the residue field of $\fm$.  We let $\Frob\left(\frac{(K_n)_\fp/\F_q(t)}{\tau}\right)$
denote the conjugacy class of $\Frob(\fm/\tau)$ in $\Gal((K_n)_\fp/\F_q(t))$
(note that elements of this conjugacy class correspond to
$\Frob(\fm'/\tau)$ as $\fm'$ ranges over all primes of $(K_n)_\fp$
lying over $\tau$).  Let
$z$ be a root of $\varphi_\fp^n(x) - t$ in $(K_n)_\fp$ and let $\cS$
denote the conjugates of $z$ in $(K_n)_\fp$.

\begin{lemma}\label{Frob2}
	Let $\alpha \in \bP^1(\F_q)$ such that the degree one prime corresponding to $\alpha$ does not ramify in $(K_n)_\fp$. There exists $\beta \in \bP^1(\F_q)$ such that $\varphi_\fp^n(\beta) = \alpha$
	if and only if 
	$\Frob\left(\frac{(K_n)_\fp/\F_q(t)}{\alpha}\right)$ has a fixed point in $\cS$.
\end{lemma}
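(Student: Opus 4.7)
The plan is to exhibit a $\Frob(\fm/\tau_\alpha)$-equivariant bijection between $\cS$ and the roots of $\varphi_\fp^n(x)-\alpha$ in $\bar{\F}_q$, from which the lemma follows because $\F_q$-rationality of a root translates directly into Frobenius-fixedness. Throughout, let $\tau_\alpha$ denote the degree-one prime of $\F_q(t)$ corresponding to $\alpha$.

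First, I fix a prime $\fm$ of $(K_n)_\fp$ lying above $\tau_\alpha$, with associated valuation ring $\fo_\fm$ and residue field $\kappa(\fm)$. Each $z\in\cS$ is integral over the localization of $\F_q[t]$ at $\tau_\alpha$, so its reduction $\bar z\in\kappa(\fm)$ satisfies $\varphi_\fp^n(\bar z)=\alpha$. The hypothesis that $\tau_\alpha$ is unramified in $(K_n)_\fp$ guarantees that $\varphi_\fp^n(x)-\alpha$ is separable with exactly $d^n=|\cS|$ distinct roots, all lying in $\kappa(\fm)$; since two distinct elements of $\cS$ would collapse to a single element of $\kappa(\fm)$ only by producing a double root of $\varphi_\fp^n(x)-\alpha$, the reduction map defines a bijection
\[\cS \;\longrightarrow\; \{\beta\in\bar{\F}_q:\varphi_\fp^n(\beta)=\alpha\}, \qquad z\mapsto \bar z.\]

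Next, I invoke the defining property of $\Frob(\fm/\tau_\alpha)$: it acts on $\kappa(\fm)$ as the $q$-power map, so for every $z\in\cS$ we have $\overline{\Frob(\fm/\tau_\alpha)(z)} = (\bar z)^q$. Because reduction is injective on $\cS$, $\Frob(\fm/\tau_\alpha)$ fixes $z$ if and only if $\bar z^q=\bar z$, if and only if $\bar z\in\F_q$. Hence the existence of $\beta\in\bP^1(\F_q)$ with $\varphi_\fp^n(\beta)=\alpha$ is equivalent to $\Frob(\fm/\tau_\alpha)$ having a fixed point in $\cS$. To promote this to the conjugacy class $\Frob\!\left(\tfrac{(K_n)_\fp/\F_q(t)}{\alpha}\right)$, one observes that conjugate permutations have the same number of fixed points (if $gz=z$ then $kgk^{-1}$ fixes $kz$), so ``has a fixed point in $\cS$'' is a class invariant.

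The only delicate point is the bijectivity of the reduction map $\cS \to \{\beta : \varphi_\fp^n(\beta) = \alpha\}$. Injectivity fails precisely when two roots of $\varphi_\fp^n(x)-t$ specialize to the same residue, which would force $\varphi_\fp^n(x)-\alpha$ to have a repeated root; this is ruled out by the unramifiedness hypothesis on $\tau_\alpha$, which geometrically says that $\alpha$ is not a branch value of the cover cut out by $\varphi_\fp^n(x)-t$. Once this is in hand, every remaining step is a direct invocation of standard Frobenius formalism.
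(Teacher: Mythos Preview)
Your argument is essentially a direct unpacking of what the paper obtains by citing \cite[Lemma 3.2]{GTZ}: there the author invokes that lemma to equate the number of degree-one primes of $\F_q(z)$ lying over $\tau_\alpha$ with the number of fixed points of the decomposition group $D(\fm\mid\tau_\alpha)$ on $\cS$, and then notes that this decomposition group is cyclic generated by Frobenius. Your reduction-map argument is precisely how one proves the cited lemma, so the two routes coincide at the level of ideas; yours is simply more self-contained.

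There is, however, a technical gap. The claim that every $z\in\cS$ is integral over the localization of $\F_q[t]$ at $\tau_\alpha$ can fail when $\varphi$ is a genuine rational function rather than a polynomial. Writing $\varphi_\fp^n=p_n/q_n$, the element $z$ is a root of $p_n(x)-t\,q_n(x)$, whose leading $x$-coefficient is $a_{d^n}-t\,b_{d^n}$; if $b_{d^n}\neq 0$ and $\alpha=a_{d^n}/b_{d^n}=\varphi_\fp^n(\infty)$, this leading coefficient lies in the maximal ideal of the localization, and some $z\in\cS$ then has negative $\fm$-valuation---it ``reduces to $\infty$'' rather than to an element of $\kappa(\fm)$. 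Your bijection to $\{\beta\in\bar\F_q:\varphi_\fp^n(\beta)=\alpha\}$ consequently misses the preimage $\beta=\infty\in\bP^1(\F_q)$, and the same issue arises when $\alpha=\infty$. The paper treats $\alpha=\infty$ by passing to $\F_q[1/t]$, and the formulation via degree-one primes of $\F_q(z)$ (rather than $\F_q$-rational roots) absorbs the $\beta=\infty$ case automatically. Your argument is easily repaired along the same lines: let the reduction map land in $\bP^1(\kappa(\fm))$ rather than $\kappa(\fm)$, and observe that since $\Frob(\fm/\tau_\alpha)$ preserves $v_\fm$ and (under the unramifiedness hypothesis) at most one $z\in\cS$ has a pole at $\fm$, that $z$ is automatically Frobenius-fixed.
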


\begin{proof} 
	
	Let $A_n$ be the integral closure of $\F_q[t]$ in $(K_n)_\fp$.  Then
	$A_n^{B_{n,\fp}} = \F_q[t]$; that is, $\F_q[t]$ is the set of elements of $A_n$
	that are fixed by every element of $B_{n,\fp}$.  
	
	Now, let $(t-\alpha)$ be a
	degree one prime in $\F_q[t]$ that does not ramify in
	$(K_n)_\fp$, and let $D(\fm | (t-\alpha))$ be the decomposition group of a prime
	$\fm$ in $(K_n)_\fp$ that lies over $(t-\alpha)$.  Then, by Lemma 3.2 of
	\cite{GTZ}, the number of degree one primes in 
	$\F_q(t,z)=\F_q(z)$
	lying over $(t-\alpha)$ is equal to the number of fixed points of
	$D(\fm | (t-\alpha))$ acting on $\cS$.  That is, the number of $\beta\in \bP^1(\F_q)$ such that $\varphi^n(\beta)=\alpha$ is equal to the number of fixed points of
	$D(\fm | (t-\alpha))$ acting on $\cS$.
	
	Likewise, working with the
	integral closure $A'_n$ of $\F_q[\frac{1}{t}]$ in $(K_n)_\fp$, we see
	that if $\tau$ is the prime at infinity in $\F_q(t)$ (that is the prime
	$(\frac{1}{t})$ in $\F_q[\frac{1}{t}]$) and $\tau$ does not ramify in
	$(K_n)_\fp$, then the number of degree one primes in $\F_q(t,z)=\F_q(z)$ lying
	over $\tau$ is equal to the number of fixed points of $D(\fm |
	\tau)$ acting on $\cS$, where $\fm$ is a prime of $A'_n$ lying over
	$\tau$.  
	
	Since decomposition groups over unramified primes are
	generated by Frobenius elements, we see that for any $\alpha
	\in \bP^1(\F_q)$ that does not ramify in $(K_n)_\fp$, there exists $\beta \in \bP^1(\F_q)$ such that $\varphi_\fp^n(\beta) = \alpha$
	if and only if 
	$\Frob\left(\frac{(K_n)_\fp/\F_q(t)}{\alpha}\right)$ has a fixed point in $\cS$.
	
\end{proof}

\begin{proof}[Proof of Proposition \ref{for_use}]
	For any $\fp \in \Spec \fo_k$ of good reduction for $\varphi$ consider $\fq\in \Spec \fo_{E_n}$ lying over $\fp$. Since $\fo_k/\fp \subseteq \fo_{E_n}/\fq$ and $\fo_k/\fp \cong \F_q$, we must have $\fo_{E_n}/\fq \cong \F_{q^m}$ for some $m \geq 1$, with $m\leq[E_n:k]$. We identify $\fo_{E_n}/\fq$ with $\F_{q^m}$. Let $\widetilde{\varphi}$ be the image of $\varphi$ in $E_n(x)$, then $\widetilde{\varphi}$ has good reduction mod $\fq$. We assumed $E_n$ is algebraically closed in $K_n$, so we may apply Lemma \ref{EGA}. Since any dense subset of $\Spec \fo_{E_n}$ contains all but finitely many primes in $\Spec \fo_{E_n}$, it follows from Lemma \ref{EGA}
	that for all but finitely many $\fq\in \Spec \fo_{E_n}$, the action of $\Gal( (K_n)_\fq /
	\F_{q^m}(t))$ on the roots of $\widetilde{\varphi}^n_\fq(x) - t$ is isomorphic to the action of $G_n$ on the roots
	of $\varphi^n(x) - t$, where $(K_n)_\fq$ is the splitting field for $\widetilde{\varphi}^n(x)-t$ over $\F_{q^m}(t)$. 
	
	Note, any root of $\varphi_\fp^n(x)-t$ is also a root of $\widetilde{\varphi}^n_\fq(x)-t$ and since $E_n\subset K_n$, we have $\F_{q^m}\subset (K_n)_\fp$, so $(K_n)_\fq\cong(K_n)_\fp$  Thus, for all but finitely many $\fp \in \Spec \fo_k$,  the action of $G_{n,\fp}:=\Gal( (K_n)_\fp /
	\F_{q^m}(t))$ on the roots of $\varphi^n_\fp(x) - t$ is isomorphic to the action of $G_n$ on the roots
	of $\varphi^n(x) - t$. 
	
	Let $\pi$ denote the number of degree one primes $\alpha$ of
	$\bP^1_{\F_q}$ such that $\alpha$ does not ramify in $(K_n)_\fp$, and for any conjugacy class $C$ of $B_{n,\fp}$, let
	$\pi_C$ denote the number of degree one primes $\alpha$ of
	$\bP^1_{\F_q}$ such that $\alpha$ does not ramify in $(K_n)_\fp$ and $\Frob\left(\frac{(K_n)_\fp/\F_q(t)}{\alpha}\right) = C$.  Then \cite[Theorem 1]{murty} states that if $C$ restricts to $x\mapsto x^q$ on $\F_{q^m}$, then
	\begin{equation}\label{MS}
	\left|  \pi_C - m\pi \frac{\#C}{\#B_{n,\fp}}  \right| \leq   2q^{1/2} \left( g_{(K_n)_\fp}\frac{\#C}{\#B_{n,\fp}}  +{\#C}\right)  + (1+{\#C})\#R
	\end{equation}
	otherwise, $\pi_C=0$.
	Here $g_{(K_n)_\fp}$ is the genus of $(K_n)_\fp$ and $R$ is the set of primes
	of $\bP^1_{\F_q}$ that ramify in $(K_n)_\fp$.  Let $\Fix(B_{n,\fp})$ be the set of
	elements of $B_{n,\fp}$ that fix a root of $\varphi^n_\fp(x)-t$.  Then
	$\frac{\#\Fix(B_{n,\fp})}{\#B_{n,\fp}} = \FPP(B_{n,\fp})$, and for any $\alpha$ outside
	of $R$, there is a $\beta$ in $\bP^1(\F_q)$ such that $\varphi_\fp^n(\beta)
	= \alpha$ if and only if $\Frob\left(\frac{(K_n)_\fp/\F_q(t)}{\alpha}\right) \subseteq \Fix(B_{n,\fp})$ by Lemma
	\ref{Frob2}.  There are at most $\#R$ ramified primes $\alpha$ of $\F_q$ such
	that $\alpha \in \varphi_\fp^n(\bP^1(\F_q))$. %Note, there are $\#G_{n,\fp}$ elements of $B_{n,\fp}$ which restrict to $x\mapsto x^q$ on $\F_{q^m}$. 
	Thus,  summing the estimates
	in \eqref{MS} over all conjugacy classes in $\Fix(B_{n,\fp})$ and diving by $q+1$, we then obtain
	\begin{equation}\label{again}
	\frac{\#\varphi_\fp^n(\bP^1(\F_q))}{q+1}  \leq m\FPP(B_{n,\fp}) + \frac{2q^{1/2}}{q+1}\left( g_{(K_n)_\fp} +\#B_{n,\fp}\right)  + \frac{(c+\#B_{n,\fp}+1) \#R}{q+1},
	\end{equation}
	where $c$ is the number of conjugacy classes of in $\Fix(B_{n,\fp})$.   
	
	Note that $\deg \varphi_\fp = \deg \varphi$
	since $\varphi$ has good reduction at $\fp$. Now, since $\varphi_\fp$ ramifies over at most $(2 \deg \varphi - 2)$
	points and $\varphi_\fp^n$ can only ramify over these points and their first
	$n-1$ iterates under $\varphi_\fp$, $\varphi_\fp^n$ ramifies over at most $n (2
	\deg \varphi  - 2)$ points.   
	
	The size of $B_{n,\fp}$ can be bounded in
	terms of $n$ and $d$, since it is a subgroup of the symmetric
	group on $d^n$ elements. Also, for any $\fp$ of characteristic
	greater than $\deg \varphi$ there is no wild
	ramification at $\varphi_\fp$. Thus, $g_{(K_n)_\fp}$ can be bounded
	in terms of $\deg \varphi_\fp$ and $n$ by the Riemann-Hurwitz theorem;
	\[g_{(K_n)_\fp} \leq \#B_{n,\fp} n (2 \deg \varphi_\fp - 2).\]  Hence, by \eqref{again}
	there is an $M_\delta$ such that for all $\fp$ with $\Norm(\fp)=q \geq M_\delta$, we
	have
	\begin{equation*}
	\frac{\#\varphi_\fp^n(\bP^1(\F_q))}{q+1}  \leq m\FPP(B_{n,\fp}) + \delta.
	\end{equation*}
	Recall for all but finitely many $\fp$, $G_{n,\fp}\cong G_n$. If necessary we increase $M_\delta$ so that $G_{n,\fp}\cong G_n$, for all $\fp$ with  $\Norm(\fp)>M_\delta$.
	
	Now we show $m\FPP(B_{n,\fp}) \leq [E_n:k]\FPP(B_n)$. First note, 
	\[
	m\FPP(B_{n,\fp})=m\cdot\sum_{G_{n,\fp}\sigma\in B_{n,\fp}/G_{n,\fp}}\frac{\#\Fix(G_{n,\fp}\sigma)}{\#B_{n,\fp}}\leq\sum_{G_{n,\fp}\sigma\in B_{n,\fp}/G_{n,\fp}}\frac{\#\Fix(G_{n,\fp}\sigma)}{\#G_{n,\fp} }.\]
	By Lemma \ref{specialization}, $B_{n,\fp}$ is isomorphic to a subgroup of $B_n$. Also, we assumed $G_{n,\fp}\cong G_n$ and hence $B_{n,\fp}/G_{n,\fp}$ is isomorphic to a subgroup of $B_{n}/G_n$. Thus we have
	\[\sum_{G_{n,\fp}\sigma\in B_{n,\fp}/G_{n,\fp}}\frac{\#\Fix(G_{n,\fp}\sigma)}{\#G_{n,\fp} }\leq \sum_{G_n\sigma\in B_{n}/G_n}\frac{\#\Fix(G_n\sigma)}{\#G_n}\]
	and finally, 
	\[\sum_{G_n\sigma\in B_{n}/G_n}\frac{\#\Fix(G_n\sigma)}{\#G_n }=\frac{\#\Fix(B_n)}{\#G_n }={[E_n:k]}\FPP(B_n).\]
	
	Thus, 
	\begin{equation*}
	\frac{\#\varphi_\fp^n(\bP^1(\F_q))}{q+1}  \leq [E_n:k]\FPP(B_{n}) + \delta.
	\end{equation*}
	
	Applying Lemma \ref{S} then finishes our proof.

\end{proof}

We immediately deduce the following as a consequence Proposition \ref{for_use}.  

\begin{corollary}\label{zero}
	With notation as in Proposition \ref{for_use}, suppose that there is some $\ell$ such that $E_n=E_\ell$ for all $n\geq \ell$.  Then, if $\lim_{n \to
		\infty} \FPP(B_n) = 0$, we have 
	\begin{equation}\label{ze} \lim_{\Norm(\fp) \to \infty} \frac{\#\Per(\varphi_\fp)}{\#\bP^1(\fo_k/\fp)
		} = 0.
	\end{equation}
\end{corollary}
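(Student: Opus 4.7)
The strategy is a straightforward diagonalization on top of Proposition~\ref{for_use}, which for every positive integer $n$ and every $\delta>0$ produces a threshold $M_\delta$ (depending also on $n$) such that
\[ \frac{\#\Per(\varphi_\fp)}{\#\bP^1(\fo_k/\fp)} \leq [E_n:k]\,\FPP(B_n)+\delta \]
for all primes $\fp$ of good reduction with $\Norm(\fp)>M_\delta$. The role of the hypothesis $E_n=E_\ell$ for $n\geq \ell$ is to make the degree $[E_n:k]$ eventually constant: setting $C:=[E_\ell:k]$, one obtains $[E_n:k]\,\FPP(B_n)=C\cdot \FPP(B_n)$ for all $n\geq \ell$, which then tends to $0$ by the second hypothesis.

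The argument proceeds as follows. Fix $\epsilon>0$. First, using $\lim_{n\to\infty}\FPP(B_n)=0$ together with the constancy of $[E_n:k]$ for $n\geq \ell$, choose $n_0\geq \ell$ with $C\cdot \FPP(B_{n_0})<\epsilon/2$. Next, apply Proposition~\ref{for_use} with this fixed $n=n_0$ and $\delta=\epsilon/2$ to obtain an $M$ such that
\[ \frac{\#\Per(\varphi_\fp)}{\#\bP^1(\fo_k/\fp)} \leq C\cdot \FPP(B_{n_0})+\frac{\epsilon}{2} < \epsilon \]
for every $\fp$ with $\Norm(\fp)>M$. Since $\epsilon$ was arbitrary, the limit in \eqref{ze} is $0$, as required.

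The main thing to be careful about is the order of the quantifiers. The threshold $M_\delta$ furnished by Proposition~\ref{for_use} depends on $n$ through bounds involving $\#B_n$ and the genus of $(K_n)_\fp$, so one cannot let $n\to\infty$ simultaneously with $\Norm(\fp)\to\infty$ inside a single invocation; one must fix $n_0$ first and only then take $\Norm(\fp)$ large. This is precisely where the stabilization hypothesis on $E_n$ enters: without it, $[E_n:k]$ could in principle grow with $n$ fast enough to defeat the decay of $\FPP(B_n)$, so no suitable $n_0$ would exist. With the stabilization in place, no further obstacle arises and the proof is complete.
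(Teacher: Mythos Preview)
Your argument is correct and is precisely the immediate $\epsilon/2$ deduction from Proposition~\ref{for_use} that the paper has in mind when it says the corollary follows directly. Your remarks on the order of quantifiers and the role of the stabilization hypothesis are accurate and make explicit what the paper leaves implicit.
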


%(take out and cite?)
%\begin{theorem}\label{CDT}(The Chebotarev Density Theorem, see \cite{FJ}\cite{LS}\cite{murty}) Let $F$ be a global field, let $K$ be a finite Galois extension of $F$, and let $G=\Gal(K/F)$. For any conjugacy class $C$ of $G$, the Dirichlet density of the set of primes $\fp$ of $F$ for which $\Frob\left(\frac{K/F}{\fp}\right)= C$ exists and is equal to $\#C/\#G$. Furthermore, if $F$ is a number field or $F$ is a function field whose constant field is algebraically closed in $K$, then the natural density of this set also exists and is equal to $\#C/\#G$.
%\end{theorem}

We now turn back to the case $G_n\cong [G]^n$.
The following theorem provides a set of conditions which ensure that this holds. 

\begin{theorem}[\cite{JKMT}, Theorem 3.1]\label{main theorem}
	Suppose $\varphi(x) \in k(x)$ is a rational function of degree $d\geq
	2$ such that $\varphi'(x)\neq 0$. Fix $N\in\mathbb{N}$ and suppose
	there is a subset $S \subseteq \varphi_\fc$
	such that the following holds:
	\begin{enumerate}
		\item  for any $a\in S$, $b \in \varphi_{\fc}$,  and $m,n\leq N$, we have
		$\varphi^m(a)\neq \varphi^n(b)$ unless $a=b$ and $m=n$; and
		\item  the group $G$ is generated by the ramification groups of the
		$\varphi(a)$ for $a \in S$, that is 
		\[ \big \langle \bigcup_{a \in S} \; \; \bigcup_{\substack{\fp|\varphi(a)-t \\
				\text{in }K_1/E(t)}}  I(\fp|\varphi(a)-t) \big\rangle = G.  \]
	\end{enumerate}
	Then we have $G_N \cong [G]^N$.
\end{theorem}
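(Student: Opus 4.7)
The plan is to proceed by induction on $N$. The case $N=1$ is immediate since $G_1 = G$. For the inductive step, assume $G_{N-1} \cong [G]^{N-1}$, and let $\beta_1, \ldots, \beta_{d^{N-1}}$ be the roots of $\varphi^{N-1}(x)-t$ in $K_{N-1}$. Any root $\gamma$ of $\varphi^N(x)-t$ satisfies $\varphi^{N-1}(\varphi(\gamma))=t$, so $\varphi(\gamma) \in \{\beta_1,\ldots,\beta_{d^{N-1}}\}$ and hence $K_{N-1} \subseteq K_N$. Partitioning the roots of $\varphi^N(x)-t$ into the $d^{N-1}$ fibers $\varphi^{-1}(\beta_j)$ yields a canonical embedding
\[
\Gal(K_N/K_{N-1}) \hookrightarrow \prod_{j=1}^{d^{N-1}} \Gal(K_{N-1}(\varphi^{-1}(\beta_j))/K_{N-1}) \hookrightarrow G^{d^{N-1}}.
\]
My goal is to show this composite is onto; together with the inductive hypothesis this gives $G_N \cong [G]^N$.

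For each $a \in S$, I would analyze the behavior of the prime $t-\varphi^N(a)$ of $E(t)$. A multiple root $\bar\delta$ of $\varphi^{N-1}(x)-\varphi^N(a)$ would require some critical $c$ and $0\le k\le N-2$ with $\varphi^{N-1-k}(c)=\varphi^N(a)$; applying hypothesis (1) with $m=N$ and $n=N-1-k$ forces $k=-1$, a contradiction. Hence the specialization is separable and $t-\varphi^N(a)$ is unramified in $K_{N-1}/E(t)$. On the other hand, since $\varphi'(a)=0$, the point $a$ is a root of $\varphi^N(x)-\varphi^N(a)$ of multiplicity at least $2$, so ramification does occur in $K_N/E(t)$. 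The inertia at any prime $\fP$ of $K_N$ above $t-\varphi^N(a)$ is therefore a nontrivial subgroup of $\Gal(K_N/K_{N-1})$.

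Next I would pinpoint the coordinate of $G^{d^{N-1}}$ in which this inertia sits. Let $\fp=\fP\cap K_{N-1}$ and let $\bar\beta_j$ denote the specialization of $\beta_j$ at $\fp$; these are precisely the $(N-1)$-st preimages of $\varphi^N(a)$, with $\varphi(a)$ itself among them. If some other $\bar\beta_j$ were a critical value $\varphi(c)$, then $\varphi^N(c)=\varphi^N(a)$; hypothesis (1) with $m=n=N$ forces $c=a$, and therefore $\bar\beta_j=\varphi(a)$, a contradiction. So $\varphi(x)-\beta_j$ is unramified at $\fp$ for every $j$ with $\bar\beta_j\ne\varphi(a)$, and the inertia at $\fP$ is supported on the single coordinate where $\bar\beta_j=\varphi(a)$. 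A comparison of completions identifies that contribution with the inertia of $K_1/E(t)$ at the prime $\varphi(a)-t$. By hypothesis (2), varying $a$ over $S$ and the primes above $\varphi(a)-t$ generates a full copy of $G$ in a single coordinate. Since $G_{N-1}\cong[G]^{N-1}$ acts transitively on $\{\beta_1,\ldots,\beta_{d^{N-1}}\}$, conjugating these inertia subgroups by lifts to $\Gal(K_N/E(t))$ produces a full copy of $G$ in every coordinate, forcing $\Gal(K_N/K_{N-1})=G^{d^{N-1}}$.

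The main technical obstacle I anticipate is the coordinate-identification step: rigorously justifying, via completions, that the inertia at $\fP$ really does sit in a single coordinate of the embedding $\Gal(K_N/K_{N-1})\hookrightarrow G^{d^{N-1}}$ and agrees there with the inertia of $K_1/E(t)$ at $\varphi(a)-t$. This requires careful tracking of how the specialization $\beta_j\mapsto\varphi(a)$ relates the local extension $K_{N-1}(\varphi^{-1}(\beta_j))/K_{N-1}$ to $K_1/E(t)$, and verifying that the critical point $a$ appearing as a repeated root of $\varphi(x)-\bar\beta_j$ is responsible for the full ramification picked up at $\fP$.
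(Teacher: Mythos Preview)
The paper does not prove this theorem; it is quoted verbatim from \cite{JKMT} (Theorem~3.1 there) and used as a black box, so there is no in-paper proof to compare against.

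That said, your proposal is the standard inductive ramification argument and is essentially the proof given in \cite{JKMT}. The skeleton---show $t-\varphi^N(a)$ is unramified in $K_{N-1}/E(t)$ via hypothesis~(1), observe it ramifies in $K_N$ because $a$ is critical, pin the resulting inertia to a single factor of $G^{d^{N-1}}$ (again using hypothesis~(1) to rule out other critical values among the $\bar\beta_j$), identify that factor with inertia in $K_1/E(t)$ above $\varphi(a)-t$, invoke hypothesis~(2) to get a full $G$ in one coordinate, then conjugate by $G_{N-1}$ acting transitively on the $\beta_j$---is exactly right. You have also correctly flagged the one step that needs genuine care: the local identification of the inertia contribution with inertia in $K_1/E(t)$, which in \cite{JKMT} is handled by passing to completions and comparing the extension $K_{N-1,\fp}(\varphi^{-1}(\beta_j))$ with the base-changed copy of $K_1$ over $E(t)$ specialized at $t\mapsto\varphi(a)$.
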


In general, it is not too hard to show that $B_n$ and $G_n$ must each be subgroups of $[S_d]^n$ the \textit{$n$-th iterated wreath product} of $S_d$ with itself \cite[Lemma 4.1]{odoni}. The possibilities for $B_n$ and $G_n$ can be restricted further.  

\begin{lemma}[\cite{JKMT}, Lemma 3.3] \label{wreath subgroup} The group $B_n$ is isomorphic to a subgroup of $[B_1]^n$, the $n$-th iterated wreath product of $B_1$. Similarly, $G_n$ is isomorphic to a subgroup of $[G]^n$.
\end{lemma}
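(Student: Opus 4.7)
The plan is to prove both embeddings by induction on $n$, with the base case $n=1$ being immediate since $[B_1]^1 = B_1$ and $[G]^1 = G$. For the inductive step, consider the tower $k(t) \subseteq K_{n-1} \subseteq K_n$, which yields the short exact sequence
\[ 1 \to \Gal(K_n/K_{n-1}) \to B_n \to B_{n-1} \to 1. \]
By the inductive hypothesis $B_{n-1} \hookrightarrow [B_1]^{n-1}$, so it suffices to embed the kernel $\Gal(K_n/K_{n-1})$ into $B_1^{d^{n-1}}$ in a manner compatible with the $B_{n-1}$-action on the $d^{n-1}$ factors.

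To analyze the kernel, enumerate the roots of $\varphi^{n-1}(x)-t$ in $K_{n-1}$ as $\beta_1, \ldots, \beta_{d^{n-1}}$ and partition the roots of $\varphi^n(x)-t$ into the fibers $F_i = \{\alpha : \varphi(\alpha) = \beta_i\}$, so that $F_i$ is exactly the set of roots of $\varphi(x) - \beta_i$. Since $\Gal(K_n/K_{n-1})$ fixes $K_{n-1}$ pointwise, it fixes each $\beta_i$ and therefore permutes each fiber $F_i$ within itself; restriction to the fibers gives an injection
\[ \Gal(K_n/K_{n-1}) \hookrightarrow \prod_{i=1}^{d^{n-1}} \Gal(L_i/K_{n-1}), \]
where $L_i$ is the splitting field of $\varphi(x) - \beta_i$ over $K_{n-1}$ inside $K_n$.

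The heart of the argument is to show that each factor $\Gal(L_i/K_{n-1})$ embeds into $B_1$. Because $t = \varphi^{n-1}(\beta_i)$ is transcendental over $k$, the element $\beta_i$ is likewise transcendental over $k$, so $k(\beta_i)$ is a rational function field and the $k$-algebra isomorphism $k(t) \to k(\beta_i)$ sending $t \mapsto \beta_i$ carries $\varphi(x) - t$ to $\varphi(x) - \beta_i$. Hence the splitting field $M_i$ of $\varphi(x) - \beta_i$ over $k(\beta_i)$ has Galois group canonically isomorphic to $B_1$, and since $k(\beta_i) \subseteq K_{n-1}$, the restriction $\Gal(L_i/K_{n-1}) \hookrightarrow \Gal(M_i/k(\beta_i)) \cong B_1$ gives the desired embedding. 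Assembling the pieces produces $B_n \hookrightarrow B_1^{d^{n-1}} \rtimes [B_1]^{n-1} = [B_1]^n$, with the semidirect product structure matching by construction.

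For the second claim $G_n \hookrightarrow [G]^n$, the argument is identical after replacing $k(t)$ by $E(t)$ and $B_\bullet$ by $G_\bullet$ throughout; here one must verify that the substitution isomorphism $E(t) \to E(\beta_i)$ still identifies $\Gal(\varphi(x) - \beta_i / E(\beta_i))$ with $G$, which holds because $E$ is algebraically closed in the purely transcendental extension $E(\beta_i)$. The main obstacle is this local identification of the fiber-stabilizer with $B_1$ (resp.~$G$); it hinges on the transcendence of $\beta_i$ over the base field, which allows the substitution $t \mapsto \beta_i$ to be used as a genuine $k$-algebra isomorphism.
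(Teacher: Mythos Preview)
The paper does not give its own proof of this lemma; it is simply quoted from \cite{JKMT}. Your argument is the standard one and is essentially correct: partition the roots of $\varphi^n(x)-t$ into fibers $F_i$ over the roots $\beta_i$ of $\varphi^{n-1}(x)-t$, and use the transcendence of each $\beta_i$ over $k$ to identify $\Gal(\varphi(x)-\beta_i/k(\beta_i))$ with $B_1$ via the substitution $t\mapsto\beta_i$.

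The only step that deserves more care is the final ``assembly''. Knowing that $\Gal(K_n/K_{n-1})\hookrightarrow\prod_i B_1$ and that $B_{n-1}\hookrightarrow[B_1]^{n-1}$ does not, by itself, produce an embedding of the (possibly non-split) extension $B_n$ into the semidirect product $B_1^{d^{n-1}}\rtimes[B_1]^{n-1}$; the map must be defined on all of $B_n$ at once, not just on the kernel and quotient separately. Concretely: fix for each $i$ an isomorphism $\phi_i\colon K_1\to M_i$ extending $t\mapsto\beta_i$, and send $\sigma\in B_n$ to $(\pi;\tau_1,\ldots,\tau_{d^{n-1}})$ with $\pi=\sigma|_{K_{n-1}}$ and $\tau_i=\phi_{\pi(i)}^{-1}\circ(\sigma|_{M_i})\circ\phi_i\in\Gal(K_1/k(t))=B_1$. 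A direct check shows this is an injective homomorphism into $B_1\wr B_{n-1}$, regardless of whether the short exact sequence splits. This is presumably what you intend by ``the semidirect product structure matching by construction,'' and indeed the paper itself uses exactly this description of elements of $B_n$ in the proof of Lemma~\ref{indicatrix}; but it is worth writing the map down explicitly rather than inferring it from the pieces.
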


If the first extension $K_1/k(t)$ is not geometric, that is, if $k$ is not algebraically closed in $K_1$, then $B_n$ will not be a full iterated wreath product, as is noted in the next proposition.

\begin{proposition}[\cite{JKMT}, Proposition 3.6]\label{not-w} Let $B = \Gal(K_1/k(t))$.
	Suppose that $k$ is not algebraically closed in $K_1$, then
	$\Gal(K_n/k(t))$ is a proper subgroup of $[B]^n$ for $n>1$.
\end{proposition}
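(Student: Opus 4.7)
The strategy is to prove a strict size inequality. Since $B_{n-1}$ embeds in $[B]^{n-1}$ by Lemma \ref{wreath subgroup}, and $|[B]^n| = |[B]^{n-1}| \cdot |B|^{d^{n-1}}$ while $|B_n| = |B_{n-1}| \cdot [K_n : K_{n-1}]$, it suffices to show
\[ [K_n : K_{n-1}] < |B|^{d^{n-1}}. \]

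To bound this degree, let $\alpha_1, \ldots, \alpha_{d^{n-1}}$ denote the distinct roots of $\varphi^{n-1}(x) - t$ inside $K_{n-1}$. Every root of $\varphi^n(x) - t$ is a root of $\varphi(x) - \alpha_i$ for some $i$, so $K_n$ is the compositum over $K_{n-1}$ of the fields $L_i := K_{n-1}(\{\beta : \varphi(\beta) = \alpha_i\})$, and the restriction maps assemble into an injection
\[ \Gal(K_n/K_{n-1}) \hookrightarrow \prod_{i=1}^{d^{n-1}} \Gal(L_i/K_{n-1}). \]
It therefore suffices to show that each factor has order at most $|G|$, since $|G| < |B|$ whenever $E = K_1 \cap \overline{k} \ne k$.

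For a fixed $i$, let $M_i$ be the splitting field of $\varphi(x) - \alpha_i$ over $k(\alpha_i)$; then $L_i = K_{n-1} \cdot M_i$, so $\Gal(L_i/K_{n-1}) \cong \Gal(M_i / M_i \cap K_{n-1})$. The isomorphism $k(\alpha_i) \cong k(t)$, $\alpha_i \mapsto t$, extends to an abstract isomorphism $M_i \cong K_1$ that identifies $\Gal(M_i/k(\alpha_i))$ with $B$ and sends $\Gal(M_i/E(\alpha_i))$ to $G$. The crucial step is to verify $E \subseteq M_i \cap K_{n-1}$: the containment $E \subseteq K_{n-1}$ is clear from $E \subseteq K_1 \subseteq K_{n-1}$, so everything reduces to showing $E \subseteq M_i$. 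Granting this, $|\Gal(L_i/K_{n-1})| \leq [M_i : E(\alpha_i)] = |G|$, finishing the proof.

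The main obstacle, and really the only step with content, is the identification $E = M_i \cap \overline{k}$. The intrinsic constant field $M_i \cap \overline{k}$ is $k$-isomorphic to $E$ via the abstract identification $M_i \cong K_1$, and since $E/k$ is a Galois extension (being the constant field of the Galois extension $K_1/k(t)$), it is the unique subfield of $\overline{k}$ that is $k$-isomorphic to itself; this forces $M_i \cap \overline{k} = E$ as subfields of $\overline{k}$.
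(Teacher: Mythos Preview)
The paper does not prove this proposition; it is cited from \cite{JKMT} without argument, so there is no in-paper proof to compare against. Your argument is correct. The size count reduces the claim to $[K_n:K_{n-1}]\le |G|^{d^{n-1}}<|B|^{d^{n-1}}$, and your decomposition of $\Gal(K_n/K_{n-1})$ via the splitting fields $M_i$ of $\varphi(x)-\alpha_i$ over $k(\alpha_i)$ gives exactly this. The only step with real content is $E\subseteq M_i$, and your treatment is right: the $k$-isomorphism $M_i\cong K_1$ (coming from $\alpha_i\mapsto t$) carries $M_i\cap\overline{k}$ onto $E$, and since $E/k$ is normal (the $k$-conjugates of any $\gamma\in E$ are its $k(t)$-conjugates, hence lie in $K_1\cap\overline{k}=E$), the only subfield of $\overline{k}$ that is $k$-isomorphic to $E$ is $E$ itself. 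With $E(\alpha_i)\subseteq M_i\cap K_{n-1}$ in hand (using $n>1$ so that $E\subseteq K_1\subseteq K_{n-1}$), the bound $|\Gal(L_i/K_{n-1})|\le |G|$ follows and the proof goes through.
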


%%%%%%%%%%%%%%%%%%%%%%%%%%%%%%%%%%%%%%%%%

If $\Gal(\varphi^n(x)-t/\overline{k}(t))\cong[G]^n$ for all $n$, where $G=\Gal(\varphi(x)-t/\overline{k}(t))$ then by Proposition \ref{not-w} $E_n=E_1$ for all n, and Corollary \ref{zero} allows us to decide exactly when the proportion of periodic points of $\varphi_\fp$ tends to zero as $\Norm(\fp)$ tends to infinity. This criteria is presented in the next theorem, which extends Theorem 1.3 from \cite{JKMT}. Moreover, using Theorem 3.1 from \cite{JKMT} (Theorem \ref{main theorem} above), it is possible to find specific examples to which we can apply this criteria, the case $\varphi(x)=x^d+c$ where $0$ is not a preperiodic point of $\varphi$ is treated in Theorem \ref{uni}, which extends Proposition 6.4 \cite{JKMT}. 

\begin{theorem}\label{collide}
	Let $\varphi$ be a rational function with degree $d > 1$ such that $\Gal(\varphi^n(x)-t/\overline{k}(t))\cong[G]^n$ for all $n$, where $G=\Gal(\varphi(x)-t/\overline{k}(t))$.
	Then 
	\begin{enumerate}
		\item[(a)] \[ \liminf_{\Norm(\fp) \to \infty} \frac{\#\Per(\varphi_\fp)}{\#\bP^1(\fo_k/\fp)}
		= 0; \] 
		\item[(b)] if each coset of $\Gal(\varphi(x)-t/k(t))/\Gal(\varphi(x)-t/\overline{k}(t))$ contains at least one fixed point free element, then we have
		\[\lim_{\Norm(\fp) \to \infty} \frac{\#\Per(\varphi_\fp)}{\#\bP^1(\fo_k/\fp)}
		= 0;\]
		\item[(c)] if there is some coset of $\Gal(\varphi(x)-t/k(t))/\Gal(\varphi(x)-t/\overline{k}(t))$ for which every element of the coset fixes a root of $\varphi(x)-t$, then we have \[ \limsup_{\Norm(\fp) \to \infty} \frac{\#\Per(\varphi_\fp)}{\#\bP^1(\fo_k/\fp)}
		= 1. \] 
	\end{enumerate}
\end{theorem}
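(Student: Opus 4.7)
The plan splits cleanly across the three parts, all based on Proposition \ref{for_use} combined with the fixed-point-proportion statements of Section \ref{nongeometric}. The structural remark preceding the theorem gives $E_n=E_1$ for all $n$, so $[E_n:k]=|A|$ is constant and Corollary \ref{zero} applies with $\ell=1$. For (a), Corollary \ref{indi} gives $\FPP(G_n)\to 0$, and I would specialize Proposition \ref{for_use} to primes $\fp$ that split completely in $E_1/k$; for these the residue-degree parameter $m$ in its proof equals $1$ and $B_{n,\fp}\cong G_n$, so the bound sharpens to $\#\Per(\varphi_\fp)/(\Norm(\fp)+1)\leq \FPP(G_n)+\delta$. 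By Chebotarev such primes have positive density; taking $\Norm(\fp)\to\infty$ along them and then $n\to\infty$, $\delta\to 0$ yields $\liminf=0$. For (b) the extra hypothesis feeds into Corollary \ref{generalFPP}, giving $\FPP(B_n)\to 0$, and Corollary \ref{zero} then delivers $\lim=0$ directly.

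For (c), fix $\sigma\in B_1$ representing the coset in the hypothesis and set $a=\sigma|_{E_1}\in A$. By Theorem \ref{FPP theorem}(2) (applicable by the wreath assumption) every element of $(B_n)_a$ fixes a root of $\varphi^n(x)-t$ for every $n$. A Burnside count on $(B_1)_a$, using that $G$ is transitive on the $d$ roots, gives $\sum_{\tau\in(B_1)_a}\#\Fix(\tau)=|G|$, so the average fixed-point count on $(B_1)_a$ is exactly $1$; the hypothesis then forces each element to fix exactly one root. Chebotarev applied to $E_1/k$ produces infinitely many primes $\fp$ with Frobenius class $a$; for each such $\fp$ the constant-field analysis in the proof of Proposition \ref{for_use} places the Frobenius at every unramified degree-one $\alpha\in\bP^1(\F_q)$ into the coset $(B_{1,\fp})_a$, so Lemma \ref{Frob2} forces $\alpha$ to have exactly one $\varphi_\fp$-preimage in $\bP^1(\F_q)$. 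Hence $\varphi_\fp$ is bijective on the complement of the set of $\leq 2d-2$ branch values of $\varphi_\fp$.

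The main obstacle in (c) is converting this near-bijectivity into the lower bound $\#\Per(\varphi_\fp)\geq\Norm(\fp)+1-C$ for a constant $C=C(d)$ that gives $\limsup=1$. The same Chebotarev argument applied to $\varphi_\fp^n$ yields $|\varphi_\fp^n(\bP^1(\F_q))|\geq\Norm(\fp)+1-n(2d-2)$ for all $n$, so what remains is to bound the stabilization time of $\{\varphi_\fp^n(\bP^1(\F_q))\}_n$ by something $o(\Norm(\fp))$ along the subsequence. I would do this via a careful ramification analysis on the forward orbits of the preimages of the branch values, following the template of the proof of Proposition \ref{uni}(c) in \cite{JKMT}; in the specialization to $\varphi=x^d+c$ treated there the critical points $0$ and $\infty$ are $k$-rational and $\varphi_\fp$ is literally bijective on $\bP^1(\F_q)$ for the relevant primes, which collapses the obstacle entirely.
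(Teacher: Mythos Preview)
Your arguments for (a) and (b) are correct and essentially match the paper. For (a) the paper passes to the extension $E$ via Lemma~\ref{eb2}, whereas you restrict directly to primes of $k$ splitting completely in $E_1$; these are the same idea phrased two ways.

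For (c), however, there is a genuine gap. You correctly reach ``near-bijectivity'' of $\varphi_\fp$ away from the branch locus, but your proposed completion---bounding the stabilization time of the chain $\{\varphi_\fp^n(\bP^1(\F_q))\}_n$ by something $o(\Norm(\fp))$---is not carried out, and the sketch you give does not indicate how it would be. Your inequality $|\varphi_\fp^n(\bP^1(\F_q))|\ge \Norm(\fp)+1-n(2d-2)$ becomes vacuous once $n$ is of order $\Norm(\fp)/(2d-2)$, and nothing you have established prevents the chain from taking that long to stabilize. There is also a uniformity issue: the isomorphism $G_{n,\fp}\cong G_n$ underlying this inequality holds only for $\fp$ outside a finite set \emph{depending on $n$}, so for a fixed $\fp$ you cannot freely let $n\to\infty$.

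The paper bypasses this obstacle entirely by invoking Lemma~\ref{surjective}, a consequence of the Guralnick--Tucker--Zieve results on exceptional covers: if every element of a generating coset of $B_{1,\fp}/G_{1,\fp}$ fixes a root of $\varphi_\fp(x)-t$, then $\varphi_\fp$ is bijective on \emph{all} of $\bP^1(\F_q)$, branch values included. Once the Chebotarev and specialization steps place you in that situation, $\varphi_\fp$ is literally a bijection, hence $\Per(\varphi_\fp)=\bP^1(\fo_k/\fp)$ and the proportion is exactly $1$ for infinitely many $\fp$. No stabilization estimate is needed, and your Burnside computation (each coset element fixing exactly one root) becomes unnecessary.
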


\begin{lemma}[\cite{JKMT}, Lemma 6.3]\label{eb2}
	Let $k$ be a number field, let $\varphi \in k[x]$, let $\fp$ be a
	prime of good reduction for $\varphi$, let $k'$ be a finite extension
	of $k$, and let $\tp$ denote the extension of $\varphi$ to
	$\bP^1_{k'}$.  Then
	\[ \liminf_{\substack{\Norm(\fp) \to \infty \\ \text{primes $\fp$
				of $k$}}} \frac{ \# \Per (\varphi_\fp)}{\#\bP^1(\fo_k/\fp)} \leq
	\limsup_{\substack{\Norm(\fq) \to \infty \\ \text{primes $\fq$
				of $k'$}}} \frac{ \# \Per (\tp_\fq)}{\#\mathbb{P}^1(\mathfrak{o}_{k'}/\mathfrak{q})}\]
\end{lemma}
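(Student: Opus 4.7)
The plan is to reduce to the case of $k$-primes $\fp$ that split completely in $k'$, since for such primes the two ratios in the inequality are literally equal on the nose. The main external input is the Chebotarev density theorem, which guarantees that such completely split primes form an infinite set of $k$-primes whose norms tend to infinity.

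First, let $L$ denote the Galois closure of $k'/k$, and let $S \subseteq \Spec\fo_k$ be the set of primes of $k$ that split completely in $L$ (equivalently, in $k'$) and are of good reduction for $\varphi$. By the Chebotarev density theorem applied to the trivial conjugacy class of $\Gal(L/k)$, the set $S$ has positive Dirichlet density $1/[L:k]$ among primes of $k$, so $S$ is infinite and $\Norm(\fp)$ is unbounded over $\fp\in S$. Next, I would verify the pointwise identification at split primes: if $\fp\in S$ and $\fq \mid \fp$ in $\fo_{k'}$, then $\fq$ has residue degree one over $\fp$, so the natural inclusion $\fo_k/\fp \hookrightarrow \fo_{k'}/\fq$ is an isomorphism of finite fields and $\Norm(\fq)=\Norm(\fp)$. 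Since $\tp$ has the same coefficients as $\varphi$ viewed in the larger ring $k'[x]$, the reduced map $\tp_\fq$ is identified under this isomorphism with $\varphi_\fp$; in particular the periodic point sets correspond and
\[ \frac{\#\Per(\varphi_\fp)}{\#\bP^1(\fo_k/\fp)} \;=\; \frac{\#\Per(\tp_\fq)}{\Norm(\fq)+1} \qquad \text{for every } \fp\in S \text{ and every } \fq \mid \fp. \]

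To finish, I would chain elementary inequalities of liminf and limsup. Restricting the index set from all primes of $k$ down to $S$ can only increase a liminf (an infimum taken over a smaller tail is larger), and $\liminf \leq \limsup$ holds on any common index set; together these bound the left-hand side of the theorem above by $\limsup$ of $\tfrac{\#\Per(\varphi_\fp)}{\#\bP^1(\fo_k/\fp)}$ taken over $\fp\in S$ with $\Norm(\fp)\to\infty$. By the pointwise identification above, this limsup equals the corresponding limsup of $\tfrac{\#\Per(\tp_\fq)}{\Norm(\fq)+1}$ taken over those primes $\fq$ of $k'$ that lie above primes of $S$, and this in turn is at most the full limsup over all primes $\fq$ of $k'$ (restricting a limsup to a subset only decreases it). There is no serious obstacle; the only points demanding care are correctly tracking the direction of inequalities when restricting liminf and limsup to subsets, and invoking Chebotarev to produce the infinite supply of completely split good-reduction primes.
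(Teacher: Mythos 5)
Your proof is correct and is essentially the argument given in the cited source \cite{JKMT} for this lemma (the paper itself only quotes the statement): restrict to primes of $k$ splitting completely in $k'$, identify $\varphi_\fp$ with $\tp_\fq$ for $\fq\mid\fp$ at such primes, and chain the standard liminf/limsup inequalities for subnets, with Chebotarev supplying infinitely many completely split primes of good reduction.
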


Let $\psi:\bP^1(\F_q)\rightarrow\bP^1(\F_q)$ be a rational function, $B=\Gal(\psi(x)-t/\F_q(t))$, $G=\Gal(\psi(x)-t/\F_{q^m}(t))$, where $\F_{q^m}$ is the algebraic closure of $\F_q$ in the splitting field of $\psi(x)-t$. Then $B/G$ is isomorphic to $\Gal(\F_{q^m}/\F_q)$ and hence $B/G$ is cyclic.

\begin{lemma}\label{surjective} If every $b\in B$ with $<Gb>=B/G$ fixes at least one root of $\psi(x)-t$, then $\psi:\bP^1(\F_q)\rightarrow\bP^1(\F_q)$ is bijective.
\end{lemma}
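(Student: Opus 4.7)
The plan is to prove that $\psi:\bP^1(\F_q)\to\bP^1(\F_q)$ is surjective; since domain and codomain are finite sets of the same size, this forces bijectivity. To produce an $\F_q$-rational preimage of an arbitrary $\alpha\in\bP^1(\F_q)$, I would fix a prime $\fm$ of $K_1$ lying over $(t-\alpha)$ (or over $(1/t)$ if $\alpha=\infty$) and work with the decomposition group $D=D(\fm)$ and inertia group $I=I(\fm)$. Choose any $\sigma\in D$ whose image in $D/I$ is the $q$-power Frobenius on the residue field. The plan is to apply the hypothesis of the lemma to $\sigma$, obtain a root $z$ of $\psi(x)-t$ with $\sigma(z)=z$, and then show that $\bar z$ is the desired $\F_q$-preimage.

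To see that the hypothesis applies to $\sigma$, we need $\langle G\sigma\rangle=B/G$. The key observation is that $I\subseteq G$: inertia fixes $\kappa(\fm)$ pointwise, hence fixes the constant field $\F_{q^m}\subseteq\kappa(\fm)$ of $K_1$, placing $I$ inside $G=\Gal(K_1/\F_{q^m}(t))$. Consequently $\sigma|_{\F_{q^m}}$ agrees with the restriction of the $q$-power Frobenius to $\F_{q^m}$, which is the generator of $\Gal(\F_{q^m}/\F_q)\cong B/G$, so the coset $G\sigma$ does generate $B/G$.

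Reducing $z$ modulo $\fm$, one has $\psi(\bar z)=\overline{\psi(z)}=\bar t=\alpha$, so $\bar z$ is a preimage of $\alpha$ in $\bP^1(\kappa(\fm))$. Moreover, $\sigma(z)=z$ together with $\sigma$ acting as $x\mapsto x^q$ on $\kappa(\fm)$ gives $\bar z=\overline{\sigma(z)}=\bar z^{q}$, so $\bar z\in\bP^1(\F_q)$, as desired.

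The main obstacle I expect is the ramified case. For unramified $\alpha$ the argument essentially reduces to Lemma \ref{Frob2}, but for ramified $\alpha$ the inertia $I$ is nontrivial and in general no root of $\psi(x)-t$ will be fixed by the whole of $D$. The point is that we do not need this: $\F_q$-rationality of $\bar z$ only requires the single equation $\bar z^q=\bar z$, which follows from $\sigma(z)=z$ alone. The inclusion $I\subseteq G$ is precisely what guarantees that every lift of Frobenius in $D$ lies in the generating coset of $B/G$ and is therefore covered by the hypothesis.
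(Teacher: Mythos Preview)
Your argument is correct. The paper's own proof is a one--line citation to \cite[Lemma~4.3 and Proposition~4.4]{GTZ}, so there is not much to compare at the level of text; but substantively your proof unpacks exactly the mechanism behind those cited results rather than invoking them as a black box.

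The comparison worth recording is this. In \cite{GTZ} the hypothesis of the lemma is precisely the \emph{exceptionality} condition for the cover $\psi:\bP^1\to\bP^1$ over $\F_q$, and the cited results show that exceptional covers induce bijections on $\F_q$--points. Your argument reproduces the core of that implication directly: for each $\alpha\in\bP^1(\F_q)$ you pick a prime $\fm$ over it, lift the $q$--Frobenius to some $\sigma\in D(\fm)$, observe that $I(\fm)\subseteq G$ so that $G\sigma$ is the generating coset of $B/G$, apply the hypothesis to get a $\sigma$--fixed root $z$, and read off $\bar z\in\bP^1(\F_q)$ with $\psi(\bar z)=\alpha$. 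Your handling of the ramified case is the point most worth noting: Lemma~\ref{Frob2} in the paper only treats unramified $\alpha$, whereas your observation that $I(\fm)\subseteq G$ forces \emph{every} lift of Frobenius to lie in the generating coset lets the same argument go through uniformly, without excluding the finitely many ramified $\alpha$. That is exactly the leverage the exceptionality framework in \cite{GTZ} provides, and you have recovered it by hand. The benefit of your route is that it keeps the paper self--contained; the benefit of the citation is brevity and the connection to the broader theory of exceptional covers.
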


\begin{proof} This follows directly from \cite[Lemma 4.3 and Proposition 4.4]{GTZ}.
\end{proof}

\begin{proof}[Proof of Theorem \ref{collide}]
	As before we let $K_n$ denote the
	splitting field of $\varphi^n(x) - t$ over $k(t)$. Let $E$ denote the algebraic closure of $k$ in $K_1$, then $G=\Gal( K_1 / E(t) )$. Since $\Gal(K_n/E(t))\subseteq [G]^n$ for all $n$, we have $\Gal(K_n/E(t))=\Gal(K_n/\overline{k}(t))\cong[G]^n$ for all $n$. Thus, by Corollary \ref{indi} and Corollary \ref{zero},
	\[ \lim_{\substack{ \Norm(\fq) \to \infty \\ \text{$\fq$ a prime of
				$E$} }} \frac{\#\Per( {\widetilde \varphi}_\fq)} {\Norm(\fq) + 1}
	= 0.\]
	Then (a) follows from Lemma \ref{eb2} and (b) follows from Theorem \ref{FPP theorem} and Corollary \ref{zero}.
	
	Now suppose there is some $\sigma \in \Gal(K_1/k(t))$ such that every element of the coset $G\sigma$ in $\Gal(K_1/k(t))/\Gal(K_1/E(t))$  fixes some root of $\varphi(x)-t$.  Let $a$ be the restriction of $\sigma$ to $E$. In the proof of Proposition \ref{for_use}, we saw that for all but finitely many $\fp\in \Spec \fo_k$, the action of $G_{1,\fp}=\Gal((K_1)_\fp/\F_{q^m}(t))$ on the roots of $\varphi_\fp(x)-t$ is isomorphic to the action of $G$ on the roots of $\varphi(x)-t$, where $\F_{q^m}$ is the algebraic closure of $\fo_k/\fp$ in $(K_1)_\fp$, let $U$ be the set of such primes. Also, by the Chebotarev density theorem there is a positive proportion of unramified primes in $\fo_k$ such that $\Frob\left(\frac{E/k}{\fp}\right)=C$, where $C$ is the conjugacy class of $a$ in $\Gal(E/k)$. Let $V$ denote the set of primes with this property.
	
	Let $\fp$ be any prime in $V\cap U$. Then since $\Frob\left(\frac{E/k}{\fp}\right)=C$, there is some prime $\fq$ of $E$ lying over $\fp$ such that $\Frob(\fq/\fp)=a$. Then since $\sigma|_E=a$, $\sigma$ induces an automorphism $\sigma_\fp$ of $(K_1)_\fp$ over $\fo_k/\fp \cong \F_q$ which is also an automorphism of $\fo_E/\fq$.
	% and $g$ induces and automorphism $g_\fp$ of $(K_1)_\fp$ over $\fo_E/\fq \cong \F_{q^m}$, 
	%so $\widetilde{a}_\fp\in B_{1,\fp}$.
	% and $g_\fp \in G_{1,\fp}$. 
	Let $\theta_1,\ldots,\theta_d$ be the roots of $\varphi(x)-t$. Since $\fp\in U$, there is a bijection $r_\fp$ between $\{\theta_1,\ldots,\theta_d\}$ and the roots of $\varphi_\fp(x)-t$, with
	$\sigma_\fp(r_\fp(\theta_i))=r_\fp(\sigma(\theta_i))$. Since there is some $i$ such that $\sigma(\theta_i)=\theta_i$, we have  $$\sigma_\fp(r_\fp(\theta_i))=r_\fp(\sigma(\theta_i))=r_\fp(\theta_i),$$ so $\sigma_\fp$ fixes a root of $\varphi_\fp(x)-t$. 
	
	Now, $a=\Frob(\fq/\fp)$ which implies $\sigma_\fp$ restricts to the map $x\mapsto x^q$ on $\F_{q^m}$, and $G_{1,\fp}\sigma_\fp$ is a generating coset for $B_{1,\fp}/G_{1,\fp} \cong \Gal(\F_{q^m}/\F_q)$. Then, Lemma \ref{surjective} implies that $\varphi_\fp:\bP^1(\fo_k/\fp)\rightarrow \bP^1(\fo_k/\fp)$ is surjective. Thus, we see that $\varphi_\fp^n$ is surjective for all $n$, so $\Per(\varphi_\fp)=\bP^1(\fo_k/\fp)$ by Lemma \ref{S} and $$\frac{\#\Per(\varphi_\fp)}{\#\bP^1(\fo_k/\fp)}=1.$$ Since $U\cap V$ is infinite, $\frac{\#\Per(\varphi_\fp)}{\#\bP^1(\fo_k/\fp)}=1$ for infinitely many primes and we have \[ \limsup_{\Norm(\fp) \to \infty} \frac{\#\Per(\varphi_\fp)}{\#\bP^1(\fo_k/\fp)}
	= 1. \] 
	% Let $I$ be the
	%  subgroup of $G$ generated by the ramification groups of the
	%  critical points.  Then $I=G$ by Remark \ref{all critical points}.  Thus, we have $\Gal( (\varphi^n(x) - t) / E(t) )
	%  \cong [G]^n$ for all $n$ by Theorem \ref{main theorem}. Thus, by
	%  Corollary \ref{zero},
	%\[ \lim_{\substack{ \Norm(\fq) \to \infty \\ \text{$\fq$ a prime of
	%      $E$} }} \frac{\#\Per( {\widetilde \varphi}_\fq)} {\Norm(\fq) + 1}
	%= 0,\]
	% and Lemma \ref{eb2} then implies (a).  If $k$ is
	%algebraically closed in $K_1$, then $k=E$ and (b) follows from
	%Corollary \ref{zero}.  
\end{proof}

We are now ready to prove Theorem \ref{uni}.

\begin{proof}[Proof of Theorem \ref{uni}]
	Note, $E = k(\zeta)$ where $\zeta=\zeta_d$ is $d$-th root of unity,  and
	the splitting field of ${ f(x)} - t$ over $k(t)$ is 
	$E(t)(\sqrt[d]{t - c})$ which has degree $d$ over $E(t)$. The prime $t-c$ ramifies completely in $E(t)(\sqrt[d]{t - c})$; thus,
	the Galois group is generated by the ramification group over $t-c$.
	Since the critical point 0 is not preperiodic, we see then that the
	conditions of Theorem \ref{main theorem} are met for all $N$.   Thus,
	for any $n$, we have   $\Gal( ({ f^n
		(x)} - t ) / E(t))
	\cong [C_d]^n$ where $C_d$ is the cyclic group of order $d$, and (a) follows from Theorem \ref{collide}.
	
	Suppose $k$ contains a primitive $\ell$-th root of unity for $\ell | d$, then $\zeta^{d/\ell}$ is fixed by any $\sigma\in\Gal(E/k)$. Let $\alpha=\sqrt[d]{t - c}$ be any $d$-th root of $t-c$. As noted above, $G=\Gal(f(x)-t/E(t))\cong C_d$, and the elements of $G$ are defined by $\alpha \mapsto \zeta^j\alpha$ for $0\leq j\leq d-1$. Define $\sigma_m$ by $\sigma_m(\zeta)=\zeta^m$. Then, for each $m$ such that $\sigma_m$ belongs to $\Gal(E/k)$ we get a coset $\{\tau_{m,j} : 0\leq j\leq d-1\}$ in $B_1/G$ where $\tau_{m,j}(\zeta^i\alpha)=\zeta^{mi+j}\alpha$. 
	
	The homomorphism $\tau_{m,j}$ fixes a root of $f(x)-t$ if and only if $\tau_{m,j}(\zeta^i\alpha)=\zeta^i\alpha$ for some $1\leq i\leq d$. This holds if and only if $mi+j\equiv i \mod d$ for some $1\leq i\leq d$ if and only if $d|(m-1)i+j$ for some $1\leq i\leq d$. Since $\sigma_m$ fixes $\zeta^{d/\ell}$ we must have $m\equiv 1 \mod \ell$ so if $j$ is relatively prime to $\ell$ there is no $i$ such that $d|(m-1)i+j$, which means that $\tau_{m,j}$ is fixed point free. Thus, each coset $\{\tau_{m,j} : 1\leq j\leq d\}$ has a fixed point free element and by Theorem \ref{collide} (b) holds.
	
	On the other hand, if $k$ does not contain a primitive $\ell$-th root of unity for any $\ell>1$ dividing $d$, then the map $\sigma_m$ defined by $\zeta\mapsto\zeta^m$ belongs to $\Gal(E/k)$ for any $m$ relatively prime to $d$. Note, $d$ must be odd since if $2|d$ the hypotheses for (c) cannot hold since clearly $-1\in k$. Thus, $(2,d)=1$ and the map defined by $\zeta\mapsto\zeta^2$ belongs to $\Gal(E/k)$. We will show that each element of $\{\tau_{2,j}:0\leq j\leq d-1\}$ has a fixed point. For any $j$, $\tau_{2,j}(\zeta^i\alpha)=\zeta^{2i+j}\alpha$ so $\zeta^i\alpha$ is fixed by $\tau_{2,j}$ if and only if $2i+j\equiv i \mod d$, which holds if and only if $d|i+j$, which must hold for some $i$ between $0$ and $d-1$. Thus, $\tau_{2,j}$ has a fixed point for each $j$. Hence, (c) also follows from Theorem \ref{collide}.
\end{proof}

\subsection*{Acknowledgements}
The author would like to thank Thomas J. Tucker and Rafe Jones for their useful comments and suggestions.

\newcommand{\etalchar}[1]{$^{#1}$}
\providecommand{\bysame}{\leavevmode\hbox to3em{\hrulefill}\thinspace}
\providecommand{\MR}{\relax\ifhmode\unskip\space\fi MR }
% \MRhref is called by the amsart/book/proc definition of \MR.
\providecommand{\MRhref}[2]{%
	\href{http://www.ams.org/mathscinet-getitem?mr=#1}{#2}
}
\providecommand{\href}[2]{#2}


\begin{thebibliography}{BGH{\etalchar{+}}13}
	%% Use the widest label as the parameter.
	%% Reference items can be numbered or have labels of your choice, as below.
	
	%% In IMPAN journals, only the title is italicized; boldface is not used.
	%% Our software will add links to many articles; for this, enclosing volume numbers in { } is helpful
	%% Do not give the issue number unless the issues are paginated separately.
	
	
	
	%%%%%%% To ease editing, use normal size:
	
	\normalsize
	\baselineskip=17pt
	
	%%%%%%%%%%%%%%%
	\bibitem[Bac91]{Bach}
	E.~Bach, \emph{Toward a theory of {P}ollard's rho method}, Inform. and Comput.
	\textbf{90} (1991), no.~2, 139--155.
	
	\bibitem[BGH{\etalchar{+}}13]{BGH}
	R.~L. Benedetto, D.~Ghioca, B.~Hutz, P.~Kurlberg, T.~Scanlon, and T.~J. Tucker,
	\emph{Periods of rational maps modulo primes}, Math. Ann. \textbf{355}
	(2013), no.~2, 637--660.
	
	\bibitem[FO90]{FO}
	P.~Flajolet and A.~M. Odlyzko, \emph{Random mapping statistics}, Advances in
	cryptology---{EUROCRYPT} '89 ({H}outhalen, 1989), Lecture Notes in Comput.
	Sci., vol. 434, Springer, Berlin, 1990, pp.~329--354.
	
	\bibitem[Fri70]{Fried}
	M.~Fried, \emph{On a conjecture of {S}chur}, Michigan Math. J. \textbf{17}
	(1970), 41--55.
	
	\bibitem[GMS03]{GMS}
	R.~M. Guralnick, P.~M{\"u}ller, and J.~Saxl, \emph{The rational function
		analogue of a question of {S}chur and exceptionality of permutation
		representations}, Mem. Amer. Math. Soc. \textbf{162} (2003), no.~773,
	viii+79.
	
	\bibitem[GTZ07]{GTZ}
	R.~M. Guralnick, T.~J. Tucker, and M.~E. Zieve, \emph{Exceptional covers and
		bijections on rational points}, Int. Math. Res. Not. IMRN (2007), Art. ID
	rnm004, 20.
	
	\bibitem[HJM15]{HJM}
	S. Hamblen, R. Jones, and K. Madhu, \emph{The density of primes in
		orbits of {$z\sp d+c$}}, Int. Math. Res. Not. IMRN (2015), no.~7, 1924--1958.
	
	
	
	\bibitem[Jon08]{Jones2}
	R.~Jones, \emph{The density of prime divisors in the arithmetic dynamics of
		quadratic polynomials}, J. Lond. Math. Soc. (2) \textbf{78} (2008), no.~2,
	523--544.
	
	\bibitem[Jon15]{Jones}
	\bysame, \emph{Fixed-point-free elements of iterated monodromy groups}, Trans.
	Amer. Math. Soc. \textbf{367} (2015), no.~3, 2023--2049.
	
	\bibitem[Juu16]{JO}
	J.~Juul, \emph{Iterates of generic polynomials and generic rational functions},
	Available at {\tt arXiv:1410.3814}, 25 pages, 2016.
	
	\bibitem[JKMT15]{JKMT}
	J.~Juul, P.~Kurlberg, K.~Madhu, and T.~Tucker, \emph{Wreath products and
		proportions of periodic points}, Int. Math. Res. Not. IMRN (2015), Art. ID
	rnv273, 26.
	
	\bibitem[KMS94]{murty}
	V.~Kumar~Murty and J.~Scherk, \emph{Effective versions of the {C}hebotarev
		density theorem for function fields}, C. R. Acad. Sci. Paris S\'er. I Math.
	\textbf{319} (1994), no.~6, 523--528.
	
	
	\bibitem[Odo85]{odoni}
	R.~W.~K. Odoni, \emph{The {G}alois theory of iterates and composites of
		polynomials}, Proc. London Math. Soc. (3) \textbf{51} (1985), no.~3,
	385--414.
	
	\bibitem[Sch23]{Schur}
	I.~Schur, \emph{{\"U}ber den zusammenhang zwischen einem problem der
		zahlentheorie und einem satz \" uber algebraische funktione n}, S.-B. Preuss.
	Akad. Wiss., Phys.â€“Math. Klasse (1923), 123--134.
	
	\bibitem[Sil08]{Silver}
	J.~H. Silverman, \emph{Variation of periods modulo {$p$} in arithmetic
		dynamics}, New York J. Math. \textbf{14} (2008), 601--616.
	


\end{thebibliography}
\end{document}